\numberwithin{equation}{section}
\newtheorem{Theorem}{Theorem}[section]
\newtheorem{Lemma}[Theorem]{Lemma}
\newtheorem{Proposition}[Theorem]{Proposition}
\newtheorem{Corollary}[Theorem]{Corollary}
\newtheorem{Assumption}{H.\!\!}
\theoremstyle{definition}
\theoremstyle{remark}
\newtheorem{Remark}{Remark}[section]
 \def\p{\partial} 
\def\to{\rightarrow}
\def\Om{\Omega}  \def\om{\omega} %\def\I{ {\rm (I) } }
\newcommand{\q}{\quad}
\def\l{\label}    \def\fa{\forall}
\def\b{\beta}  \def\a{\alpha} 
\def\eps{\varepsilon}
 \def\t{\times}  
\def\ms{\medskip}
\def \la{\langle} \def\ra{\rangle}
\def\cF{\mathcal{F}}
\def\cH{\mathcal{H}}
\def\cI{\mathcal{I}}
\def\cP{\mathcal{P}}
\def\cS{\mathcal{S}}
\def\cU{\mathcal{U}}
\def\cW{\mathcal{W}}
\def\d{{\mathrm{d}}}
\def\sB{\mathbb{B}}
\def\sD{{\mathbb{D}}}
\def\sE{{\mathbb{E}}}
\def\sF{{\mathbb{F}}}
\def\sI{{\mathbb{I}}}
\def\sN{{\mathbb{N}}}
\def\sP{\mathbb{P}}
\def\sR{{\mathbb R}}
\DeclareMathOperator*{\esssup}{ess\,sup}
\newcommand{\lc}
{\mathrel{\raise2pt\hbox{${\mathop<\limits_{\raise1pt\hbox
{\mbox{$\sim$}}}}$}}}
\newcommand{\gc}
{\mathrel{\raise2pt\hbox{${\mathop>\limits_{\raise1pt\hbox{\mbox{$\sim$}}}}$}}}
\newcommand{\ec}
{\mathrel{\raise2pt\hbox{${\mathop=\limits_{\raise1pt\hbox{\mbox{$\sim$}}}}$}}}
\def\bb{\begin{equation}} \def\ee{\end{equation}}
\def\bbn{\begin{equation*}} \def\een{\end{equation*}}
\def\beqn{\begin{eqnarray}}  \def\eqn{\end{eqnarray}}
\def\beqnx{\begin{eqnarray*}} \def\eqnx{\end{eqnarray*}}
\def\bn{\begin{enumerate}} \def\en{\end{enumerate}}
\def\bd{\begin{description}} \def\ed{\end{description}}
\begin{document}

\title{
Path regularity of coupled McKean-Vlasov FBSDEs
}

\author{
Christoph Reisinger\thanks{
Mathematical Institute, University of Oxford, Oxford OX2 6GG, UK
 ({\tt christoph.reisinger@maths.ox.ac.uk, 
wolfgang.stockinger@maths.ox.ac.uk,
yufei.zhang@maths.ox.ac.uk})}
\and
Wolfgang Stockinger\footnotemark[1]
\and
Yufei Zhang\footnotemark[1]
}
\date{}

\maketitle

\noindent\textbf{Abstract.} 
This paper establishes H\"{o}lder time regularity  of solutions to  coupled McKean-Vlasov forward-backward stochastic differential equations (MV-FBSDEs).
%\color{blue}
This is not only of fundamental mathematical interest, but also essential for their numerical approximation.
We show that a solution triple to a MV-FBSDE  with Lipschitz coefficients is  $1/2$-H\"{o}lder continuous in time in the $L^{p}$-norm
provided that it admits a Lipschitz decoupling field. 
Special examples include
 decoupled MV-FBSDEs, 
coupled MV-FBSDEs with a small time horizon
 and 
 coupled stochastic Pontryagin systems 
 arsing from mean field control problems.

\medskip

\noindent
\textbf{Key words.} 
path regularity,
Malliavin differentiability,
mean field forward-backward stochastic differential equation. 

\medskip

\noindent
\textbf{AMS subject classifications.} 
  60G17, 60H07, 49N60 
%60G17   Sample path properties
%60H07 	Stochastic calculus of variations and the Malliavin calculus
%49N60  	Regularity of solutions in optimal control
%60H10   Stochastic ordinary differential equations (aspects of stochastic analysis) [See also 34F05]
%49N80  	Mean field games and control [See also 91A16]

%%91A15 Stochastic games, stochastic differential games
%%65C30 Numerical solutions to stochastic differential and integral equations {For theoretical aspects, see 60H35} [See also 65M75, 65N75]
%60H35  	Computational methods for stochastic equations (aspects of stochastic analysis)
%65L70  	Error bounds for numerical methods for ordinary differential equations

\medskip

%\tableofcontents

\section{Introduction}\l{sec:intro}
In this paper, we establish path regularity of solutions to 
 fully-coupled McKean-Vlasov forward-backward stochastic differential equations (MV-FBSDEs).
Let 
$T>0$,
  $(\Om, \cF,  \sP)$ be
 a  complete probability space
 on which  a $d$-dimensional Brownian motion $W = (W^{(1)}_{t},\ldots, W^{(d)}_{t})_{t\in [0,T]}$ is defined,
 %\footnotemark, 
% \footnotetext{We denote by $(\cdot)^*$  the   transposition  of a matrix.}
 and  
 $\sF = (\cF_t)_{t\in [0,T]}$ be  the natural filtration of $W$
 augmented with an independent $\sigma$-algebra $\cF_0$.
We consider  the following MV-FBSDEs: 
 for $t \in [0,T]$,
\begin{subequations}\label{eq:mv-fbsde}
\begin{alignat}{2}
\d X_t&=b(t,X_t,Y_t,\sP_{(X_t,Y_t,Z_t)})\,\d t +\sigma (t,X_t,Y_t,  \sP_{(X_t,Y_t,Z_t)})\, \d W_t, 
\q 
&&X_0=\xi_0,
\l{eq:mv-fbsde_fwd}
\\
\d Y_t&=-f(t,X_t,Y_t,Z_t, \sP_{(X_t,Y_t,Z_t)})\,\d t+Z_t\,\d W_t,
\q 
&& Y_T=g(X_T,\sP_{X_T}),
\l{eq:mv-fbsde_bwd}
%\l{eq:mv-fbsde_terminal}
\end{alignat}
\end{subequations}
 where $\xi_0\in L^2(\cF_0;\sR^n)$, 
 $(b,\sigma, f,g)$ are given Lipschitz continuous functions,
$\sP_U$ denotes the law of a given random variable $U$,
 and
a solution triple  $(X,Y,Z)$
is an $\sR^n\t \sR^m\t \sR^{m\t d}$-valued
square-integrable 
adapted process satisfying \eqref{eq:mv-fbsde} $\sP$-almost surely.

Such equations  play an
important role in large population optimization problems 
(see e.g.~\cite{chassagneux2014,bensoussan2015,carmona2015,carmona2018a, gobet2019,lauriere2019}).
In particular, 
the solutions of \eqref{eq:mv-fbsde}
 give a stochastic representation of the solutions
and their derivatives 
 to certain nonlinear nonlocal 
 partial differential equations (PDEs) defined on the Wasserstein space,
 %i.e., 
 the so-called  nonlinear Feynman-Kac representation formula (see \cite{chassagneux2014,lauriere2019}).
Moreover,  by applying the stochastic maximum principle, we can construct both the equilibria of  mean field games and the solutions to  mean field control problems based on  solutions 
of \eqref{eq:mv-fbsde}.

Let $(X,Y,Z)$ be a given solution triple to \eqref{eq:mv-fbsde}. We aim to 
establish its  path regularity of the following form: for all $p\in \sN, t,s\in [0,T]$
and 
 for every
 partition $\pi=\{0=t_0<\cdots<t_N=T\}$,
%with  stepsize $|\pi|=\max_{i=0,\ldots, N-1}(t_{i+1}-t_i)$,
\begin{align}
\sE\bigg[\sup_{0\le t\le T}|Z_t|^p\bigg]\le C_{(p,\xi_0)},
\q
\sE\left[\sup_{s\le r\le t}|X_r-X_s|^p\right]
 +\sE\left[\sup_{s\le r\le t}|Y_r-Y_s|^p\right]
& \le
C_{(p,\xi_0)}
|t-s|^{{p}/{2}},
\l{eq:XY_holder_intro}
\\
\sum_{i=0}^{N-1}
\sE\bigg[
\bigg(
\int_{t_i}^{t_{i+1}}
|Z_r-Z_{t_i}|^2\,\d r\bigg)^{p/2}
+
\bigg(
\int_{t_i}^{t_{i+1}}
|Z_r-Z_{t_{i+1}}|^2\,\d r\bigg)^{p/2}
\bigg]
& \le C_{(p,\xi_0)} 
|\pi|^{{p}/{2}},
\l{eq:Z_holder_intro}
\end{align}
where $|\pi|=\max_{i=0,\ldots, N-1}(t_{i+1}-t_i)$
and $C_{(p,\xi_0)}$ is a constant depending only on $p$ and the initial condition.
Such path regularity results are crucial for practical applications of MV-FBSDEs.
For example, 
an estimate of 
$(X,Y,Z)$ in the norm
$\sE\left[\sup_{0\le t\le T}|\cdot|^p\right]$ 
with a sufficiently large $p \in \sN$
is  essential for
establishing 
 convergence rates of the particle approximation
for \eqref{eq:mv-fbsde}
(so-called quantitative propagation of chaos results),
where   the marginal law $\sP_{(X_t,Y_t,Z_t)}$ is approximated 
by the empirical distribution of interacting particles at each $t\in [0,T]$
(see e.g.~\cite[Proposition 5]{lauriere2019}).
Moreover, 
by applying the stochastic maximum principle 
and studying the  path regularity of the associated  MV-FBSDEs
(i.e., the stochastic Pontryagin systems),
we can analyze the time regularity 
and discrete-time approximations
of   open-loop optimal controls
of (extended) mean field control problems via purely probabilistic arguments,
without analyzing the classical solutions to the associated 
infinite-dimensional PDEs on the measure spaces
(see \cite{reisinger2020_mfc}).
Finally, 
it is well-known that the H\"{o}lder regularity of the process $Z$ 
in \eqref{eq:Z_holder_intro}
plays a  crucial role in quantifying convergence rates of time-stepping schemes for 
 (MV-)FBSDEs,
 in particular the approximation error of the  process $Z$; 
see e.g.~\cite{zhang2004,lionnet2015} for  classical BSDEs
and \cite[Theorem 4.2]{reisinger2020} for  MV-FBSDEs.

Although such path regularity has been proved  for
classical FBSDEs (without mean field interaction) 
in various papers (e.g. \cite{bender2008,lionnet2015,zhang2017}),
{%\color{blue}
to the best of our knowledge, there is no published work on the 
path regularity of solutions to MV-FBSDE \eqref{eq:mv-fbsde}
with 
possibly degenerate diffusion coefficient $\sigma$
and general Lipschitz continuous $f$,
even for the decoupled cases}.
In this work, we shall close the gap by showing 
that  a given solution triple to \eqref{eq:mv-fbsde} enjoys the regularity estimates
\eqref{eq:XY_holder_intro}-\eqref{eq:Z_holder_intro}
provided that the process $Y$ admits a Lipschitz decoupling field (see Theorem \ref{thm:mv-fbsde_regularity}).
Such condition holds if \eqref{eq:mv-fbsde} is uniquely solvable and 
stochastically stable with respect to the initial condition,
which can be verified 
for several practically important cases, including 
 decoupled MV-FBSDEs, 
coupled MV-FBSDEs with a small time horizon
(see Corollary \ref{cor:small_T})
 and coupled MV-FBSDEs whose coefficients satisfy a generalized monotonicity condition
 (see Corollary \ref{cor:mono}).

%We would like to point out that t
The mean field interaction and the strong coupling in \eqref{eq:mv-fbsde}
 pose a significant challenge for
establishing the path regularity
beyond those encountered in \cite{bender2008,lionnet2015,zhang2017}. 
Recall that  a crucial step in deriving these path regularity results for classical 
decoupled FBSDEs is to represent 
 the  Malliavin derivatives of the solutions
by using 
the first variation processes
(i.e., the derivatives of the solutions  with respect to the initial condition)
and their inverse.
However, such a representation no longer holds for solutions to MV-FBSDEs, 
since equations for the first variation processes
 will involve the derivatives of 
 marginal distributions of the solutions
with respect to  the initial condition,
 which do not appear in equations for the Malliavin derivatives of the solutions.
 Moreover, due to the strong coupling between the forward and backward equations,
 the first variation of the forward component of the solution
 will depend on 
 the first variations of the backward components,
 which creates an essential difficulty in establishing the invertibility of the first variation processes.
 
 We shall overcome the above difficulties by employing the  
   decoupling field of the solution,
 which enables us to express the backward component $Y$ of the solution  as
a  function of the forward component $X$
and then 
rewrite the  coupled MV-FBSDE as a  
 decoupled FBSDE,
whose coefficients depend on the decoupling field and the flow $(\sP_{(X_t,Y_t,Z_t)})_{t\in [0,T]}$.
Note that these modified  coefficients are in general  merely
Lipschitz continuous in space and
 square integrable in time, 
 due to the lack of time regularity 
 of the decoupling field and the flow $t\mapsto \sP_{Z_t}$.
  We 
  then establish a representation formula of the process $Z$,
  based on ``partial" first variation processes of the solutions
  and  the weak derivatives of the decoupling field and 
   these  irregular coefficients,
   which subsequently leads us to the desired path regularity of \eqref{eq:mv-fbsde};
see the discussion at the end of Section  \ref{sec:proof_regularity}
for details.

%This article is organized as follows.
We state the path regularity
results for   coupled MV-FBSDE \eqref{eq:mv-fbsde} in Section \ref{sec:mv-fbsde}
and present its proof in Section \ref{sec:proof_regularity}. 
 Appendix \ref{sec:proof_mono} is  devoted to the proofs of some technical results.
 
\textbf{Notation.}
We end this section by introducing  some  notation  used throughout  this paper.
%We denote by 
% $T>0$  a  given  terminal time,
% by $(\Om, \cF,  \sP)$
% a given complete probability space
% on which  a $d$-dimensional Brownian motion $W = (W^{(1)}_{t},\ldots, W^{(d)}_{t})^*_{t\in [0,T]}$ is defined\footnotemark, 
% \footnotetext{We denote by $(\cdot)^*$  the   transposition  of a matrix.}
% and by 
% $\sF = (\cF_t)_{t\in [0,T]}$  the natural filtration of $W$
% augmented with an independent $\sigma$-algebra $\cF_0$.
%%with $\cF_0 =\{\emptyset, \Om\}$ and $\cF= \cF_T$.
%Throughout this paper, 
% all equalities and inequalities 
%on a vector/matrix quantity 
%will be 
%understood
%to hold componentwise in $\sP$-almost surely sense.
%
%
%
For any given $n\in \sN$ and $x\in \sR^n$, we  denote by 
  $\sI_n$  the $n\t n$ identity matrix,
  by ${0}_{n}$ the  zero element
  of  $\sR^{n}$
   and
by
 $\bm{\delta}_{x}$
 the Dirac measure supported at $x$.
We shall denote by $\la \cdot,\cdot\ra$
the usual inner product in a given Euclidean space
and by   $|\cdot|$ the norm induced by $\la \cdot,\cdot\ra$,
which in particular satisfy  for all 
$n,m,d\in \sN$
and
$\theta_1=(x_1,y_1,z_1),\theta_2=(x_2,y_2,z_2)\in \sR^n\t \sR^m\t \sR^{m\t d}$
that
$\la z_1,z_2\ra =\textrm{trace}(z^*_1z_2)$
and 
$\la \theta_1,\theta_2\ra =\la x_1,x_2\ra+\la y_1,y_2\ra+\la z_1,z_2\ra$,
where  $(\cdot)^*$ denotes the   transposition  of a matrix.
%For any given $n,m\in \sN$, we denote by 
%$\mathbf{0}_{n\t m}$ the zero

We then introduce several spaces:
for each $p \ge 1$, $k \in \sN$, $t\in [0,T]$
and Euclidean space $(E,|\cdot|)$,
%and $(\tilde{E},|\cdot|_{\tilde{E}})$:
%$\cB^{0,1}([0, T]\t E;\tilde{E})$ is the space of all  Borel measurable functions $\phi:[0, T ]\t E\to \tilde{E}$ which  
%are continuously differentiable with respect to the spatial variables;
$L^p(\Om; E)$ is the space  of 
 $E$-valued
$\cF$-measurable
random variables $X$ satisfying
$\|X\|_{L^p}=\sE[|X|^p]^{1/p}<\infty$,
and 
$L^p(\cF_t; E)$ is the subspace  of $L^p( \Om;E)$
containing all 
$\cF_t$-measurable
random variables;
%$L^\infty(\cF_t; E)$ is the subspace of  essentially bounded random variables;
$\cS^p(t,T;E)$ is the space of 
$\sF$-progressively  measurable %c\`{a}dl\`{a}g 
processes
$Y: \Om\t [t,T]\to E$ %$Y: [t,T]\t \Om\to E$ 
satisfying $\|Y\|_{\cS^p}=\sE[\esssup_{s\in [t,T]}|Y_s|^p]^{1/p}<\infty$,
and
$\cS^\infty$ is the subspace of  $\cS^p(t,T;E)$ 
containing all uniformly bounded processes
$Y$ 
satisfying $\|Y\|_{\cS^\infty}=\esssup_{(s,\om)}|Y_s|<\infty$;
 $\cH^p(t,T; E)$ is the space of 
  $\sF$-progressively measurable
 processes 
$Z: \Om\t [t,T]\to E$  %$Z: [t,T]\t \Om\to E$ 
 satisfying $\|Z\|_{\cH^p}=\sE[(\int_t^T|Z_s|^2\,\d s)^{p/2}]^{1/p}<\infty$;
 $\sD^{1,p}(E)$ is the  space of 
 Malliavin differentiable random variables.
 For notational simplicity, 
 when $t=0$,
 we often
denote 
% $\cB^{0,1}=\cB^{0,1}([0, T]\t E;\tilde{E})$,
  $\cS^p=\cS^p(0,T;E)$
  and $\cH^p=\cH^p(0,T;E)$,
% and   $\sD^{1,p}=\sD^{1,p}(E)$,
 %   respectively,
if no confusion occurs.

Moreover, for every  %nonempty closed subset $E$ of an 
Euclidean space $(E, |\cdot|)$,
we denote by $\cP_2(E)$  the metric space of  probability measures 
$\mu$
on $E$ satisfying $\|\mu\|_2=(\int_E |x|^2\,\d \mu(x))^{1/2}<\infty$,
endowed with the  $2$-Wasserstein metric defined by 
$$
\cW_2(\mu_1,\mu_2)
\coloneqq \inf_{\kappa\in \Pi(\mu_1,\mu_2)} \left(\int_{E\t E}|x-y|^2\d \kappa( x, y)\right)^{1/2},
\q \mu_1,\mu_2\in \cP_2(E),
$$
where $\Pi(\mu_1,\mu_2)$ is the set of all couplings of $\mu_1$ and $\mu_2$, i.e.,
$\kappa\in \Pi(\mu_1,\mu_2)$ is a probability measure on $E\t E$ such that $\kappa(\cdot\t E)=\mu_1$ 
and $\kappa(E\t \cdot)=\mu_2$.
%For each $\mu_1, \mu_2\in \cP_2(E)$,
%we can  deduce from  the definition of $\cW_2$  that
%$\cW_2(\mu_1,\mu_2)\le \sE[|X_1-X_2|^2]^{1/2}$, 
%%and $\cW_p(\mu_1,\mu_2)\le \cW_q(\mu_1,\mu_2)$,
% where $X_1$ and $X_2$ are $E$-valued random variables having  distributions 
% $\mu_1$ and $\mu_2$, respectively.
%In the case that the metric $\rho$ is induced by an Euclidean  norm $|\cdot|$,

\section{Path regularity of fully coupled MV-FBSDEs}\l{sec:mv-fbsde}
In this section, we establish an $L^p$-path regularity result for \eqref{eq:mv-fbsde},
whose coefficients 
$(b, \sigma, f, g)$  satisfy the following standing assumptions:
\begin{Assumption}\l{assum:mv-fbsde_lip}
Let $n,m,d\in \sN$,
$T\in [0,\infty)$
and let
$b: [0,T]\t \sR^n\t \sR^m \t \cP_2(\sR^{n}\t \sR^m\t \sR^{m\t d})\to \sR^n $,
$\sigma: [0,T] \t \sR^n\t \sR^m\t \cP_2(\sR^{n}\t \sR^m\t \sR^{m\t d})\to \sR^{n\t d} $,
$f: [0,T]\t \sR^n\t \sR^m\t \sR^{m\t d}\t \cP_2(\sR^{n}\t \sR^m\t \sR^{m\t d})\to \sR^{m} $
and 
$g: \sR^n\t \cP_2(\sR^{n})\to \sR^{m}$
be measurable functions
satisfying 
for some $L,K\in [0,\infty)$
   that:
\begin{enumerate}[(1)]
\item\l{item:mv-fbsde_lip}
For all $t\in [0,T]$,
the functions
$b(t,\cdot)$, $\sigma(t,\cdot)$, $f(t,\cdot)$ and $g(\cdot)$
are uniformly Lipschitz continuous in all  variables
with a Lipschitz constant $L$.
\item  \l{item:mv-fbsde_moment}
$
 \|b(\cdot,{0},{0},\bm{\delta}_{{0}_{n+m+md}})\|_{L^2(0,T)}
 +
 \|f(\cdot,{0},{0}, {0},\bm{\delta}_{{0}_{n+m+md}})\|_{L^2(0,T)}+
 |g({0},\bm{\delta}_{{0}_{n}})|\le K
 $,
and it holds for all $\mu\in \cP_2(\sR^{m\t d})$ that
$\|\sigma(\cdot,0,0,\bm{\delta}_{0_{n+m}}\t \mu)\|_{L^\infty(0,T)}\le K$.

\end{enumerate}
\end{Assumption}
\begin{Remark}
Throughout this paper,
we shall  denote by $C\in [0,\infty)$ a generic constant, which is independent of the initial condition $\xi_0$,
though it may depend on the constants appearing in the assumptions  
  and may take a different value at each occurrence. 
Dependence of $C$ on additional parameters will be indicated explicitly by $C_{(\cdot)}$, e.g. $C_{(p)}$ for some $p\in \sN$.
\end{Remark}

The following theorem presents a general path regularity result for 
a given solution to 
 \eqref{eq:mv-fbsde},
provided that 
 the solution admits a  decoupling field and
 enjoys a natural
  moment estimate.
In the subsequent analysis,
for a given triple $(X,Y,Z) \in  \cS^2(\sR^n) \t \cS^2(\sR^m) \t \cH^2(\sR^{m\t d})$ 
and a constant $L_v\in [0,\infty)$,
we say $Y$ admits an $L_v$-Lipschitz decoupling field if
there
exists a measurable function $v :[0,T ] \t \sR^n\to \sR^m$ 
such that
$\sP(\fa t\in [0,T], Y_t=v(t,X_t))=1$
and
it holds for all $t\in [0,T]$ 
and $x,x'\in \sR^n$
that
$|v(t,x)-v(t,x')|\le L_v|x-x'|$.
For the sake of readability,  we postpone the  proof 
of Theorem \ref{thm:mv-fbsde_regularity}
to Section \ref{sec:proof_regularity}.

\begin{Theorem}\l{thm:mv-fbsde_regularity}
Suppose (H.\ref{assum:mv-fbsde_lip}) holds.
Let 
 $\xi_0\in L^2(\cF_0;\sR^n)$,
$L_v,M\in [0,\infty)$
and 
$(X,Y,Z) \in  \cS^2(\sR^n) \t \cS^2(\sR^m) \t \cH^2(\sR^{m\t d})$ 
be a solution to   \eqref{eq:mv-fbsde}
satisfying the following two conditions:
(1) $Y$ admits an $L_v$-Lipschitz decoupling field;
(2) 
$\|X\|_{\cS^2}+\|Y\|_{\cS^2}+\|Z\|_{\cH^2}\le M(1+\|\xi_0\|_{L^2})$.
 Then 
we have that:
\begin{enumerate}[(1)]
\item\l{item:mv_Z_bdd}
There exists a constant $C>0$ such that  
   for $\d \sP\otimes \d t$ a.e., $|Z_t|\le C| {\sigma}(t,X_t,Y_t,\sP_{(X_t,Y_t,Z_t)})|$.
Consequently, 
   for all  $p\ge 2$,
there exists
a constant $C_{(p)}>0$
such that
$
\|X\|_{\cS^p}+\|Y\|_{\cS^p}+ \|Z\|_{\cS^p}
\le 
 C_{(p)}
\big(1+
\|\xi_0\|_{L^{p}}
\big)$.
\item\l{item:mv_XY_Holder}
For any $p\ge 2$, 
there exists 
a constant $C_{(p)}>0$
such that 
 it holds for all $0\le s\le t\le T$ that
 $ \sE\left[\sup_{s\le r\le t}|X_r-X_s|^p\right]^{1/p}
 +\sE\left[\sup_{s\le r\le t}|Y_r-Y_s|^p\right]^{1/p}
 \le
C_{(p)}(1+\|\xi_0\|_{L^{p}})
|t-s|^{{1}/{2}}
$.
\item \l{item:mv_Z_Holder}
Assume further that 
%$\sigma$ is independent of $\sP_Z$ 
for each $(t,x,y)\in [0,T]\t\sR^n\t \sR^m$,
the function
 $\cP_2(\sR^{n}\t \sR^m\t \sR^{m\t d})\ni \eta\mapsto 
 \sigma(t,x,y,\eta)\in\sR^{n\t d}$
 depends only on  
 the   marginal $\pi_{1,2}\sharp \eta= \eta(\cdot\t \sR^{m\t d})$
 of the measure $\eta$,
% \in \cP_2(\sR^{n+m})$,
and 
there exists a constant $C_\sigma\in [0,\infty)$
such that 
 for all $s,t \in [0,T]$, $(x,y,\eta)\in \sR^n\t \sR^m\t \cP_2(\sR^{n}\t \sR^m\t \sR^{m\t d})$,
\bb\l{eq:sigma_local_holder}
|\sigma(s,x,y,\eta)-{\sigma}(t,x,y,\eta)|\le 
C_\sigma
\big(1+|x|+|y|+
%\cW_2(\mu,\bm{\delta}_{0_{n+m}})
\|\pi_{1,2}\sharp \eta\|_2
\big)|s-t|^{1/2}.
\ee
Then 
for any $p\ge 2$ and $\eps>0$, 
there exists 
a constant $C_{(p,\eps)} >0$
such that it holds
 for every
 partition $\pi=\{0=t_0<\cdots<t_N=T\}$
with  stepsize $|\pi|=\max_{i=0,\ldots, N-1}(t_{i+1}-t_i)$ that,
\begin{align}\l{eq:Z_holder}
\begin{split}
&\sum_{i=0}^{N-1}
\sE\bigg[
\bigg(
\int_{t_i}^{t_{i+1}}
|Z_r-Z_{t_i}|^2\,\d r\bigg)^{p/2}
+
\bigg(
\int_{t_i}^{t_{i+1}}
|Z_r-Z_{t_{i+1}}|^2\,\d r\bigg)^{p/2}
\bigg]^{1/p}
\\
&\quad
 \le C_{(p,\eps)} 
(1+\|\xi_0\|_{L^{p+\eps}})|\pi|^{{1}/{2}}.
\end{split}
\end{align}
\end{enumerate}

%In the case that 
%\eqref{eq:mv-fbsde} is  independent of the state variable $Y$ (but could depend on $\sP_{Y}$),
%the condition that  
%$Y$ admits  a decoupling field can be omitted.

\end{Theorem}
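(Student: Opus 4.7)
The plan is to exploit the decoupling field $v$ to reduce the coupled MV-FBSDE to a decoupled system, and then to carry out a Malliavin-type analysis adapted to the limited regularity of the resulting coefficients. Substituting $Y_t=v(t,X_t)$ into \eqref{eq:mv-fbsde_fwd} turns the forward equation into an SDE $\d X_t=\tilde b(t,X_t)\,\d t+\tilde\sigma(t,X_t)\,\d W_t$, where $\tilde b(t,x)\coloneqq b(t,x,v(t,x),\sP_{(X_t,Y_t,Z_t)})$ and $\tilde\sigma$ is analogous, the measure flow being viewed as frozen data. The $L_v$-Lipschitz property of $v$ keeps $\tilde b,\tilde\sigma$ Lipschitz in $x$, but they carry no time regularity, since $v$ need not be time-continuous and $t\mapsto\sP_{Z_t}$ is a priori only $L^2$; the whole proof must accommodate this.

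For part (1), I would spatially mollify $v$ to $v^\eps$, apply It\^o's formula to $v^\eps(\cdot,X_\cdot)$, match the martingale part with the BSDE \eqref{eq:mv-fbsde_bwd}, and pass $\eps\to 0$. This identifies $Z_t=\nabla v(t,X_t)\sigma(t,X_t,Y_t,\sP_{(X_t,Y_t,Z_t)})$ almost everywhere, where $\nabla v$ is the weak spatial gradient, bounded by $L_v$ via Rademacher's theorem. The estimate $|Z_t|\le L_v|\sigma|$ is immediate, and combining it with the linear growth of $\sigma$ and a standard Gr\"onwall/BDG bootstrap on the forward SDE delivers the $\cS^p$-bound on $(X,Y,Z)$. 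Part (2) is then routine: $X$ is $1/2$-H\"older by BDG on its SDE using the $L^p$-bounds on $b,\sigma$ along the trajectory, and $Y$ is $1/2$-H\"older by BDG on $\int_s^t Z_r\,\d W_r$ in the BSDE, now that $Z\in\cS^p$.

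Part (3) is the main technical step. I would regularize the data so that $(X,Y,Z)$ becomes Malliavin differentiable in the classical sense; the derivatives $(D_sX,D_sY,D_sZ)_{s\le t}$ then satisfy a linear FBSDE obtained by differentiating the decoupled system. Using the chain rule $D_sY_t=\nabla v(t,X_t)D_sX_t$ for $s\le t$ and the trace identity $Z_t=D_tY_t$, I recover $Z_t=\nabla v(t,X_t)\sigma(t,X_t,Y_t,\sP_{(X_t,Y_t,Z_t)})$. To bound $Z_r-Z_{t_i}$ for $r\in[t_i,t_{i+1}]$, I would use the splitting
\begin{equation*}
Z_r-Z_{t_i}=\bigl(D_rY_r-D_{t_i}Y_r\bigr)+\bigl(D_{t_i}Y_r-D_{t_i}Y_{t_i}\bigr).
\end{equation*}
The second difference is controlled by integrating the linear BSDE for $D_{t_i}Y$ over $[t_i,r]$ and applying BDG, which yields an $O(|r-t_i|^{1/2})$ bound in $L^p$. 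The first difference is a ``partial'' first variation: it compares the two ways of injecting a Brownian perturbation, at time $r$ versus at time $t_i$ with the flow propagated to $r$, and is bounded using the weak spatial derivatives of $v$ and $\tilde b,\tilde\sigma$ together with the time-H\"older assumption \eqref{eq:sigma_local_holder} on $\sigma$. An analogous splitting handles $Z_r-Z_{t_{i+1}}$ since $r\le t_{i+1}$. Summing the resulting bounds over the partition and unwinding the regularization by a stability estimate produces \eqref{eq:Z_holder}, with the small loss $\eps>0$ in integrability arising from the H\"older embedding used when passing from increments of $X$ to moments of $\xi_0$.

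The main obstacle is that the classical proof, which inverts a first variation process to represent the Malliavin derivative, is unavailable here: in the mean field setting the first variation involves Wasserstein derivatives of the law flow, and in the strongly coupled setting its invertibility is not clear. Moreover the decoupled coefficients $(\tilde b,\tilde\sigma)$ and the decoupling field $v$ possess only weak spatial derivatives and no time regularity. The ``partial'' first variation device, freezing the measure flow during the variation and retaining only the spatial Malliavin perturbation, is what circumvents both difficulties and allows all bounds to be expressed solely in terms of $L_v$, the Lipschitz constants from (H.\ref{assum:mv-fbsde_lip}), and the constant $C_\sigma$ from \eqref{eq:sigma_local_holder}.
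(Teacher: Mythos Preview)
Your overall architecture---freeze the law flow, use the decoupling field to reduce to a decoupled FBSDE with irregular-in-time coefficients, and run a Malliavin/first-variation analysis---matches the paper's. Two of your steps, however, do not go through as written.

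\textbf{Part (1) via It\^o's formula.} You propose to spatially mollify $v$ to $v^\eps$ and apply It\^o's formula to $v^\eps(\cdot,X_\cdot)$. But spatial mollification leaves $t\mapsto v^\eps(t,x)$ merely measurable, so It\^o's formula is not applicable: the $\partial_t v^\eps$ term is undefined. The paper circumvents this by using Malliavin calculus instead of It\^o's formula. It first shows $X_t,Y_t\in\sD^{1,2}$ (mollify the coefficients, bound Malliavin norms uniformly, take weak limits), and then applies the Malliavin chain rule to $Y_t=v(t,X_t)$, which only requires $x\mapsto v(t,x)$ to be Lipschitz for each fixed $t$---no time regularity of $v$ is needed. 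Combined with $Z_t=D_tY_t$ and $D_tX_t=\tilde\sigma(t,X_t)$, this yields $Z_t=\partial v(t)\tilde\sigma(t,X_t)$ directly for the original solution.

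\textbf{Part (3), the regularize-then-unwind argument.} Your plan to ``regularize the data \dots\ and unwind the regularization by a stability estimate'' is precisely the route the paper warns does \emph{not} work (see the closing discussion of Section~\ref{sec:proof_regularity}): after mollification one would need $t\mapsto\tilde\sigma^\eps(t,X^\eps_t)$ to be $1/2$-H\"older in $L^p$ \emph{uniformly in $\eps$}, and since $\tilde\sigma^\eps$ inherits no time regularity this fails. The paper instead establishes the representation $Z_t=\partial Y_t\,\partial X_t^{-1}\,\tilde\sigma(t,X_t)$ directly for the unmollified solution, where $(\partial X,\partial Y,\partial Z)$ solve a linear FBSDE whose coefficients are the \emph{weak} spatial derivatives of $(\tilde b,\tilde\sigma,\tilde f,\tilde g)$. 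Contrary to your last paragraph, this partial first variation $\partial X$ \emph{is} invertible---it solves a linear SDE with bounded coefficients, so its inverse is given by the usual adjoint SDE---and this invertibility is exactly what makes the representation work. The paper's three-term splitting $Z_r-Z_{t_i}=I_{1,r}+I_{2,r}+I_{3,r}$, obtained by varying each factor in the product $\partial Y\,\partial X^{-1}\tilde\sigma$, is equivalent to your two-term splitting once one writes $D_sY_t=\partial Y_t\,\partial X_s^{-1}\tilde\sigma(s,X_s)$; the decisive term $I_{2,r}$ is controlled by rewriting $\tilde\sigma(t,X_t)=\sigma(t,X_t,Y_t,\sP_{(X_t,Y_t)})$ and invoking \eqref{eq:sigma_local_holder} together with the already-proved $L^p$-H\"older continuity of $(X,Y)$---no mollification enters.
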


\begin{Remark}\l{eq:rmk_regularity}

Note that the
%moment estimates of the processes $(X,Y,Z)$ in 
%Item 
% (\ref{item:mv_Z_bdd})
% and 
 H\"{o}lder regularity of the processes $X$, $Y$ in Item 
 (\ref{item:mv_XY_Holder})
has the optimal dependence on the integrability of the initial condition $\xi_0$ and the time regularity of the coefficients $b$, $\sigma$ and $f$.
The dependence 
on $\|\xi_0\|_{L^{p+\eps}}$ in \eqref{eq:Z_holder} 
appears due to the application of H\"{o}lder's inequality in the analysis,
which is sharp for deterministic initial data since $\|x_0\|_{L^{p}}=|x_0|$ for all $p\ge 2$.
Even though we  only assume 
that  
$\xi_0\in L^2(\cF_0;\sR^n)$,
the moment bounds and regularity estimates 
in  Theorem \ref{thm:mv-fbsde_regularity} obviously hold 
if $\xi_0\not\in L^p(\cF_0;\sR^n)$ for some $p> 2$,
since the right-hand side would be infinity.

\end{Remark}

%For the sake of readability,  we postpone the  proof 
%of Theorem \ref{thm:mv-fbsde_regularity}
%to Section \ref{sec:proof_regularity}.
% as it involves several  lengthy calculations.
%Roughly speaking, we shall study the regularity of a given solution $(X,Y,Z)$ to \eqref{eq:mv-fbsde}
%by reducing the equation into a decoupled FBSDE with modified coefficients,
%which involve the marginal law $(\sP_{(X_t,Y_t,Z_t)})_{t\in[0,T]}$ of the solution
%and the decoupling field of the process $Y$
%  as an additional time dependence.
%Note that we merely assume \textit{a priori} that $Z\in \cH^2$
%and the decoupling field of   the process $Y$
%is  measurable in time, 
%hence
%the  coefficients 
%of the modified (decoupled) FBSDEs
%are 
% in general
% only square integrable in time,
%which, in particular, may  be  discontinuous or even unbounded in $t$.
%This leads us to study  decoupled FBSDEs   with irregular time dependence in
% Section \ref{sec:fbsde}.
%}
%
%
%It is clear that the existence of a Lipschitz decoupling field
%for the process $Y$
% plays an important role 
%in our regularity estimates.

The conditions in Theorem \ref{thm:mv-fbsde_regularity}
are satisfied by most MV-FBSDEs appearing in practice.
In particular, 
the following proposition shows that 
the solution to \eqref{eq:mv-fbsde} admits  a Lipschitz decoupling field 
if \eqref{eq:mv-fbsde} is uniquely solvable and stochastically stable,
whose proof follows from similar 
arguments as that of 
\cite[Proposition 5.7]{carmona2015}.

\begin{Proposition}\l{eq:decoupling_exist}
%Suppose (H.\ref{assum:mv-fbsde_lip}) holds.
Assume 
for all  
$t\in [0,T]$ and 
 $\xi\in L^2(\cF_t;\sR^n)$ that 
there exists a unique triple of processes
$(X^{t,\xi},Y^{t,\xi},Z^{t,\xi}) \in  \cS^2(t,T;\sR^n) \t \cS^2(t,T;\sR^m) \t \cH^2(t,T;\sR^{m\t d})$ 
satisfying \eqref{eq:mv-fbsde} on $[t,T]$ with  
the initial condition $X^{t,\xi}_t=\xi$.
Assume further that there exists a constant $\bar{L}>0$ such that 
it holds for all $t\in [0,T]$  and $\xi,\xi'\in L^2(\cF_t;\sR^n)$ that 
$\|Y^{t,\xi}_t-Y^{t,\xi'}_t\|_{L^2}\le \bar{L}\|\xi-\xi'\|_{L^2}$.
Then for all $\xi_0\in L^2(\cF_0;\sR^n)$,  \eqref{eq:mv-fbsde} admits a unique solution 
 $(X,Y,Z)\in \cS^2(\sR^n) \t \cS^2(\sR^m) \t \cH^2(\sR^{m\t d})$
and the process $Y$ admits an $\bar{L}$-Lipschitz decoupling field.
%$v:[0,T]\t \sR^n\to \sR^m$.
\end{Proposition}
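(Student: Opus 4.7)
The plan is to adapt the classical decoupling-field construction of the referenced Carmona--Delarue Proposition~5.7 to the McKean--Vlasov setting. Existence of a solution from $\xi_0$ is immediate: applying the unique-solvability hypothesis at $(t, \xi) = (0, \xi_0)$ delivers $(X, Y, Z) \in \cS^2(\sR^n) \times \cS^2(\sR^m) \times \cH^2(\sR^{m \times d})$, and the same hypothesis furnishes uniqueness in this class.

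To construct the candidate decoupling field, I would define $v(t, x) := Y^{t, x}_t$ for $(t, x) \in [0, T] \times \sR^n$, using the unique solution $(X^{t, x}, Y^{t, x}, Z^{t, x})$ of \eqref{eq:mv-fbsde} on $[t, T]$ started from the deterministic constant $x$. Two independence observations ensure $Y^{t, x}_t$ is a deterministic element of $\sR^m$: it is $\cF_t$-measurable as the value of an $\sF$-adapted process, and it is measurable with respect to $\sigma(W_s - W_t : s \in [t, T])$ since, with a deterministic starting point, the MV-FBSDE on $[t, T]$ is driven solely by the future Brownian increments and its coefficients depend only on laws along its own solution. The independence of these two sigma-algebras forces $Y^{t, x}_t$ to be a.s. constant, well-defining $v(t, x)$, and the $L^2$-Lipschitz stability applied with $\xi = x$, $\xi' = x'$ gives $|v(t, x) - v(t, x')| \leq \bar L |x - x'|$.

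The decisive step is then identifying $Y_t = v(t, X_t)$ a.s. for every $t \in [0, T]$. A flow argument via uniqueness first gives $Y_t = Y^{t, X_t}_t$: the restriction $(X, Y, Z)|_{[t, T]}$ is a solution of \eqref{eq:mv-fbsde} on $[t, T]$ with $\cF_t$-measurable initial datum $X_t$, hence coincides with $(X^{t, X_t}, Y^{t, X_t}, Z^{t, X_t})$ by the uniqueness hypothesis. To conclude $Y^{t, X_t}_t = v(t, X_t)$ a.s., I would approximate $X_t$ in $L^2$ by $\cF_t$-measurable simple random variables $\xi^{(n)} = \sum_j x_j^{(n)} \mathbf{1}_{A_j^{(n)}}$; the Lipschitz stability gives $Y^{t, \xi^{(n)}}_t \to Y^{t, X_t}_t$ in $L^2$, Lipschitz continuity of $v(t, \cdot)$ gives $v(t, \xi^{(n)}) \to v(t, X_t)$ in $L^2$, and on each atom $A_j^{(n)}$ one uses the independence of $\sigma(W_s - W_t : s \in [t, T])$ from $\cF_t$ to reduce to the deterministic case $\xi = x_j^{(n)}$.

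The hard part will be this last identification on simple inputs: in the McKean--Vlasov setting, the coefficients of the MV-FBSDE from $\xi^{(n)}$ depend on the law $\sP_{\xi^{(n)}}$ rather than on $\xi^{(n)}(\omega)$ pointwise, so the straightforward atomwise decomposition available in the classical Markovian case is not directly applicable. The $L^2$-Lipschitz stability must therefore serve as a quantitative substitute for pointwise locality, yielding the pointwise identification in the limit $n \to \infty$ and thereby the $\bar L$-Lipschitz decoupling property.
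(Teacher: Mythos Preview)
Your definition $v(t,x) \coloneqq Y^{t,x}_t$ is not the decoupling field of \eqref{eq:mv-fbsde}; this is exactly the point made in Remark~\ref{rmk:decoupling_Z} immediately following the proposition. The MV-FBSDE started at time $t$ from the deterministic point $x$ carries the law flow emanating from $\bm{\delta}_x$, whereas $(X,Y,Z)|_{[t,T]}$ solves \eqref{eq:mv-fbsde} with the law flow emanating from $\sP_{X_t}$. These are two different equations with different coefficients, so there is no reason for $Y^{t,X_t}_t(\om)$ to equal $Y^{t,X_t(\om)}_t$. A trivial example: take $b=\sigma=f=0$ and $g(x,\mu)=\int y\,\mu(\d y)$; then $Y_t=\sE[\xi_0]$ for all $t$, so the true decoupling field is the constant $\sE[\xi_0]$, while your $v(t,x)=Y^{t,x}_t=x$.

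You correctly sense the obstruction in your final paragraph, but the proposed fix does not close the gap. On each atom $A_j^{(n)}$ the random variable $Y^{t,\xi^{(n)}}_t$ comes from the MV-FBSDE with initial law $\sP_{\xi^{(n)}}$, whereas your $v(t,x_j^{(n)})=Y^{t,x_j^{(n)}}_t$ comes from the one with initial law $\bm{\delta}_{x_j^{(n)}}$. The discrepancy $\|Y^{t,\xi^{(n)}}_t-v(t,\xi^{(n)})\|_{L^2}$ is therefore \emph{not} controlled by $\|\xi^{(n)}-X_t\|_{L^2}$ and need not vanish as $n\to\infty$; the $L^2$-stability hypothesis compares two initial conditions for the \emph{same} MV-FBSDE, not solutions of equations with different frozen laws.

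The argument referenced from \cite{carmona2015} proceeds differently. One first observes that $Y^{t,\xi}_t$ is $\sigma(\xi)$-measurable: once the deterministic law flow $(\sP_{(X^{t,\xi}_s,Y^{t,\xi}_s,Z^{t,\xi}_s)})_{s\in[t,T]}$ is fixed, the system is a classical FBSDE driven by $\xi$ and $(W_s-W_t)_{s\ge t}$, so its time-$t$ value is a measurable function of $\xi$ alone. This yields a function $u$, depending on $t$ and on $\sP_\xi$, with $Y^{t,\xi}_t=u(\xi)$; the correct decoupling field is $v(t,\cdot)=u$ for the particular choice $\xi=X_t$. The $\bar L$-Lipschitz bound then follows by applying the stability hypothesis to pairs $\xi,\xi'\in L^2(\cF_t;\sR^n)$ having the \emph{same} law $\sP_{X_t}$ (so that both are represented through the same $u$), exploiting the richness of $\cF_0$ to construct appropriate couplings.
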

%\begin{proof}
%The proof follows 
%precisely the steps in the arguments  for
%\cite[Proposition 5.7]{carmona2015}.
%The  main step is first observing that 
%the Yamada--Watanabe
%theorem  (see  \cite[Remark 1.6]{delarue2002}) extends to \eqref{eq:mv-fbsde},
%which shows that 
%the pathwise uniqueness of solutions to \eqref{eq:mv-fbsde} implies  
%the uniqueness in the sense of probability law.
%This and the stochastic stability of \eqref{eq:mv-fbsde} allow us to construct a mapping $\cU:[0,T]\t \sR^n\t \cP_2(\sR^n)\to \sR^m$
%satisfying
% for all $t\in [0,T]$ and $\xi,\xi'\in L^2(\cF_t;\sR^n)$ that $Y^{t,\xi}_t=\cU(t,\xi,\sP_{\xi})$
% and $\|\cU(t,\xi,\sP_{\xi})-\cU(t,\xi',\sP_{\xi'})\|_{L^2}\le \bar{L}\|\xi-\xi'\|_{L^2}$,
% from which
% we can associate 
% for every $\xi_0\in L^2(\cF_0;\sR^n)$
% an $\bar{L}$-Lipschitz 
% decoupling field 
%$v:[0,T]\t \sR^n\to \sR^m$ of the process $Y^{0,\xi_0}$.
%\end{proof}
\begin{Remark}\l{rmk:decoupling_Z}
Note that due to the mean field interaction in \eqref{eq:mv-fbsde},
the   decoupling field $v$ depends on the law of the initial condition $\xi_0$.
Hence unlike
for the classical FBSDEs, in general
we do not have  for all $(t,x)\in [0,T]\t \sR^n$ that 
$Y^{t,x}_t=v(t,x)$.
This creates   a significant challenge
in establishing the  H\"{o}lder regularity of the mapping
$t\mapsto v(t,x)$ for a given $x\in \sR^d$.
In fact, 
a common approach in the existing literature
to 
analyze the 
time regularity of $v$ 
usually
involves establishing the relation 
$v(t,\cdot)=\cU(t,\cdot,\sP_{X_t})$ for all $t\in [0,T]$,
 identifying the map $\cU$ as a solution to
  an  infinite-dimensional PDE on $[0,T]\t \sR^n\t \cP_2(\sR^n)$,
and then analyzing this PDE
under strong regularity assumptions on the coefficients of \eqref{eq:mv-fbsde}, such as the
boundedness and
high-order differentiability conditions 
(see e.g.~\cite{chassagneux2014}).

%In this work, we develop our regularity estimate for \eqref{eq:mv-fbsde} 
%(cf.~Theorem \ref{thm:mv-fbsde_regularity})
% without employing the H\"{o}lder regularity of the decoupling field.
%which subsequently enables us to
%study extended MFC problems with general coefficients
%in Section \ref{sec:mfcE}.
\end{Remark}

%Proposition \ref{eq:decoupling_exist} shows that 
%the regularity estimates in Theorem \ref{thm:mv-fbsde_regularity}
%hold provided that 
% \eqref{eq:mv-fbsde} is uniquely solvable,
% stochastically stable and the solution enjoys a  natural moment estimate.
We now present two concrete structural assumptions for the coefficients of \eqref{eq:mv-fbsde}
under which 
\eqref{eq:mv-fbsde} is uniquely solvable,
 stochastically stable and the solution enjoys a  natural moment estimate.
%we  verify these conditions for fully coupled MV-FBSDEs satisfying suitable structural assumptions
%  arising from applications.
 The first one shows that 
Theorem \ref{thm:mv-fbsde_regularity} holds if 
the terminal time $T$ is sufficiently small compared to 
the coupling between \eqref{eq:mv-fbsde_fwd} and  \eqref{eq:mv-fbsde_bwd}
(see e.g.~\cite{delarue2002,chassagneux2014,gobet2019}),
which includes  decoupled MV-FBSDEs as  special cases.
Similar results can be extended to 
 weakly coupled (MV-)FBSDEs  as in  \cite{bender2008},
 where the function $b$ is strongly decreasing in $x$ 
 or the function $f$ is strongly decreasing in $y$.

 \begin{Corollary}\l{cor:small_T}
 Suppose (H.\ref{assum:mv-fbsde_lip}) holds
 and let the functions 
 $\sigma$ and $g$ 
 satisfy for some 
 $   L^g,L^\sigma_z\in [0,\infty)$,
  for all $(t,x,y, Y)\in [0,T]\t \sR^n\t \sR^m\t L^2(\Om; \sR^m)$,
$X,X'\in L^2(\Om; \sR^n)$, 
$Z,Z'\in L^2(\Om; \sR^{m\t d})$
  that
$|\sigma(t,x,y,\sP_{(X,Y,Z)})-\sigma(t,x,y,\sP_{(X,Y,Z')})|\le L^\sigma_z \cW_2(\sP_Z,\sP_{Z'})$
and
 $\|g(X,\sP_{X})-g(X',\sP_{X'})\|_{L^2}\le L^g\|X-X'\|_{L^2}$. 
 If 
 %we assume in addition  that 
 $c_0\coloneqq L^\sigma_zL^g<1$,
then there exists a constant $C_{(L,c_0)}>0$
%depending only on 
%$L$ in (H.\ref{assum:mv-fbsde_lip}) and $c_0$,
such that it holds
for all $T\le C_{(L,c_0)}$ and  $\xi\in L^2(\cF_0;\sR^n)$ that
\eqref{eq:mv-fbsde} admits a unique solution 
$(X,Y,Z) \in  \cS^2(\sR^n) \t \cS^2(\sR^m) \t \cH^2(\sR^{m\t d})$ 
satisfying the regularity estimates in Theorem \ref{thm:mv-fbsde_regularity},
Items (\ref{item:mv_Z_bdd})--(\ref{item:mv_Z_Holder}).

In particular,  
if \eqref{eq:mv-fbsde} is decoupled in the sense that
\eqref{eq:mv-fbsde_fwd}
% is independent of  the process $Y$ and the flow
%$(\sP_{(Y_t,Z_t)})_{0\le t\le T}$,
depends only on the process $X$ and the flow
$(\sP_{X_t})_{t\in[0,T]}$,
then 
the regularity estimates in  Theorem \ref{thm:mv-fbsde_regularity},
Items (\ref{item:mv_Z_bdd})--(\ref{item:mv_Z_Holder})
hold for all  $T\in [0,\infty)$.

\end{Corollary}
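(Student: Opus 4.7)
The strategy is to reduce the corollary to an application of Theorem \ref{thm:mv-fbsde_regularity} via Proposition \ref{eq:decoupling_exist}. Concretely, for a threshold $T\le C_{(L,c_0)}$ I plan to produce, for every $(t,\xi)\in[0,T]\t L^2(\cF_t;\sR^n)$, a unique solution $(X^{t,\xi},Y^{t,\xi},Z^{t,\xi})\in\cS^2(t,T;\sR^n)\t\cS^2(t,T;\sR^m)\t\cH^2(t,T;\sR^{m\t d})$ to \eqref{eq:mv-fbsde}, together with a Lipschitz bound $\|Y^{t,\xi}_t-Y^{t,\xi'}_t\|_{L^2}\le\bar L\|\xi-\xi'\|_{L^2}$ for some $\bar L=\bar L(L,c_0)$ that is independent of $t$. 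Proposition \ref{eq:decoupling_exist} then delivers the $\bar L$-Lipschitz decoupling field for $Y$, and specialising the stability estimate to $\xi'=0$, combined with the linear-growth constants in (H.\ref{assum:mv-fbsde_lip})(\ref{item:mv-fbsde_moment}), yields $\|X\|_{\cS^2}+\|Y\|_{\cS^2}+\|Z\|_{\cH^2}\le M(1+\|\xi_0\|_{L^2})$. All hypotheses of Theorem \ref{thm:mv-fbsde_regularity} are then in place, so items (\ref{item:mv_Z_bdd})--(\ref{item:mv_Z_Holder}) follow.

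For the small-$T$ existence and stability I would run a two-level Picard iteration. Freezing the flow of joint laws $\mu=(\mu_t)_t\in C([0,T];\cP_2(\sR^{n+m+md}))$ turns \eqref{eq:mv-fbsde} into a classical (non-mean-field) FBSDE with random coefficients; for this inner problem I would iterate in $\cS^2\t\cS^2\t\cH^2$ and apply standard It\^o-based $L^2$-energy estimates to successive differences. The Lipschitz constants of $(b,\sigma,f)$ contribute terms of order $T$ or $\sqrt{T}$ that are absorbed by taking $T$ small, whereas the coupling between the terminal condition $Y_T=g(X_T,\cdot)$ and the $\sP_Z$-dependence of $\sigma$ produces a genuine geometric factor bounded by $c_0=L^\sigma_z L^g$ that must be strictly less than $1$ for the scheme to close. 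A second, outer fixed point on $\mu$ in $(C([0,T];\cP_2),\sup_t\cW_2)$, driven by the same $L^2$-estimate applied now to two distinct frozen flows, is again contractive as soon as $T$ is small enough depending only on $L$ and $c_0$. Running the same differential estimate on the solutions themselves for two different initial data $\xi,\xi'$ at time $t$ produces the stability constant $\bar L$, and taking $\xi'=0$ together with (H.\ref{assum:mv-fbsde_lip})(\ref{item:mv-fbsde_moment}) yields the required moment bound with $M=M(L,K,c_0)$.

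For the decoupled case the forward equation \eqref{eq:mv-fbsde_fwd} is a McKean-Vlasov SDE in $X$ alone with Lipschitz drift and diffusion, hence uniquely solvable in $\cS^2$ for arbitrary $T$ by Sznitman's classical Picard argument; with the resulting flow $(X_t,\sP_{X_t})_t$ fixed, the backward part \eqref{eq:mv-fbsde_bwd} is a MV-BSDE with Lipschitz driver and random coefficients, uniquely solvable in $\cS^2\t\cH^2$ for any $T$ via the standard contraction in $\cH^2$ together with a fixed point on the law argument. The stability $\|Y^{t,\xi}_t-Y^{t,\xi'}_t\|_{L^2}\le\bar L\|\xi-\xi'\|_{L^2}$ then follows from an unconditional Gronwall estimate since the forward equation no longer feels $Z$ and thus no smallness of $c_0$ is needed; Proposition \ref{eq:decoupling_exist} and Theorem \ref{thm:mv-fbsde_regularity} apply for every $T\in[0,\infty)$. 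The main technical obstacle throughout is the careful bookkeeping of constants in the $L^2$-estimate so that the contraction factor takes the form $c_0+\eta(T)$ with $\eta(T)\to 0$ as $T\downarrow 0$, since this is precisely what permits the admissible horizon $C_{(L,c_0)}$ to be chosen quantitatively as a function of $L$ and $c_0$ alone.
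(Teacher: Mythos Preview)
Your proposal is correct and follows essentially the same approach as the paper: verify the hypotheses of Proposition \ref{eq:decoupling_exist} (unique solvability on $[t,T]$ for every $\xi\in L^2(\cF_t;\sR^n)$ together with the $L^2$-stability of $Y^{t,\xi}_t$ in $\xi$) via a small-time contraction argument, and then invoke Theorem \ref{thm:mv-fbsde_regularity}. The paper's proof is merely a pointer to the fixed-point arguments in \cite[Theorem 8.2.1 and Corollary 8.2.2]{zhang2017}, adapted to the mean-field setting; your two-level Picard scheme (freeze the law flow, then iterate on it) is a standard and valid way to carry out exactly this extension, and your identification of $c_0=L^\sigma_z L^g$ as the residual contraction factor that does not vanish with $T$ is the correct reason the condition $c_0<1$ appears.
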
 
\begin{proof}
One can establish the desired properties in Proposition \ref{eq:decoupling_exist}
by extending 
 the fixed-point arguments in 
\cite[Theorem 8.2.1 and Corollary 8.2.2]{zhang2017}
   for classical FBSDEs 
 to the present setting 
with mean field interaction,
whose detailed steps are 
  omitted.
  \end{proof}

We now  show that 
Theorem \ref{thm:mv-fbsde_regularity} holds
for  coupled MV-FBSDEs with  an arbitrary terminal time,
provided that the coefficients satisfy a
 monotonicity condition.

\begin{Corollary}\l{cor:mono}
 Suppose (H.\ref{assum:mv-fbsde_lip}) holds. % and (H.\ref{assum:mv-fbsde_mono}) hold.
 Assume further the  monotonicity condition holds: 
% Assume further that
%there exists 
%a full-rank matrix 
there exist
 $\a_1,\b_1,\b_2, L_\phi\in[ 0,\infty)$,
 $G\in \sR^{m\t n}$
 and
 measurable functions
$\phi_1:L^2(\Om;\sR^n)^{\otimes 2}\to [0,\infty)$,
$\phi_2:[0,T]\t L^2(\Om;\sR^n\t \sR^m\t\sR^{m\t d})^{\otimes 2}\to [0,\infty)$
such that 
for all 
%$\om\in \Om$,
$t\in [0,T]$, $i\in \{1,2\}$, 
 $\Theta_i\coloneqq (X_i,Y_i,Z_i)\in L^2(\Om; \sR^n\t \sR^m\t \sR^{m\t d})$,
% ,
% $(\delta X,\delta Y,\delta Z) \coloneqq (X_1-X_2,Y_1-Y_2,Z_1-Z_2)$
 %that
\begin{align*}%\l{eq:monotonicity}
\begin{split}
&\sE[\la b(t,X_1,Y_1,\sP_{\Theta_1})-b(t,X_2,Y_2,\sP_{\Theta_2}), G^*( Y_1-Y_2)\ra]
 \\
&\quad 
+\sE[\la \sigma(t,X_1,Y_1,\sP_{\Theta_1})-\sigma(t,X_2,Y_2,\sP_{\Theta_2}), G^*(  Z_1-Z_2)\ra]
\\
&\quad 
+ \sE[\la -f(t,\Theta_1,\sP_{\Theta_1})+f(t,\Theta_2,\sP_{\Theta_2}), G (X_1-X_2)\ra] 
\\
&\quad
 \le -\beta_1
 \phi_1(X_1,X_2)
 -\beta_2
  \phi_2(t,\Theta_1, \Theta_2),
\\
&\sE[\la g(X_1,\sP_{X_1})-g(X_2,\sP_{X_2}), G( X_1-X_2)\ra]
\ge  \a_1 \phi_1(X_1,X_2).
\end{split}
\end{align*}
Moreover, 
one of the following two conditions is satisfied:
%when  $m<n$, we have   
(1) $\b_2>0$ and for all 
%$\om\in \Om$,
$t\in [0,T]$,
%and for every $\ell=b,\sigma$  
\begin{align}\l{eq:b_sigma_lip}
\begin{split}
\| (b,\sigma)(t,X_1,Y_1,\sP_{\Theta_1})-(b,\sigma)(t,X_2,Y_2,\sP_{\Theta_2})\|^2_{L^2}
\le  L_\phi(\|X_1-X_2\|_{L^2}^2+ \phi_2(t,\Theta_1, \Theta_2));
\end{split}
\end{align}
%when  $m>n$, we have   
or (2) $\a_1,\b_1>0$ and for all 
%$\om\in \Om$,
$t\in [0,T]$  that 
\begin{align}\l{eq:f_g_lip}
\begin{split}
&\| f(t,X_1,Y_2,Z_2,\sP_{(X_1,Y_2,Z_2)})-f(t,X_2,Y_2,Z_2,\sP_{(X_2,Y_2,Z_2)})\|^2_{L^2}
\\
&\quad
+\|g(X_1,\sP_{X_1})-g(X_2,\sP_{X_2})\|^2_{L^2}
\le  L_\phi \phi_1(X_1, X_2).
\end{split}
\end{align}
Then
it holds
for all $\xi_0\in L^2(\cF_0;\sR^n)$ that
\eqref{eq:mv-fbsde} admits a unique solution 
$(X,Y,Z) \in  \cS^2(\sR^n) \t \cS^2(\sR^m) \t \cH^2(\sR^{m\t d})$ 
satisfying the regularity estimates  in Theorem \ref{thm:mv-fbsde_regularity},
Items (\ref{item:mv_Z_bdd})--(\ref{item:mv_Z_Holder}).

\end{Corollary}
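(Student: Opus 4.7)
The plan is to verify the two hypotheses of Proposition \ref{eq:decoupling_exist} --- unique solvability on $[t,T]$ and the Lipschitz stability of $\xi\mapsto Y^{t,\xi}_t$ --- together with the linear-growth moment bound required by Theorem \ref{thm:mv-fbsde_regularity}(2), and then invoke those two results directly. The core technical tool throughout is It\^o's formula applied to the pairing $\la Y_s-\tilde Y_s,\,G(X_s-\tilde X_s)\ra$ for two solutions of \eqref{eq:mv-fbsde} with initial data $\xi,\tilde\xi\in L^2(\cF_0;\sR^n)$; combined with the monotonicity hypothesis and the lower bound on $g$, this yields the a priori estimate
\begin{align*}
\sE[\la Y_t-\tilde Y_t,\,G(\xi-\tilde\xi)\ra] \ge \a_1\sE[\phi_1(X_T,\tilde X_T)]+\b_1\int_t^T\sE[\phi_1(X_s,\tilde X_s)]\,\d s+\b_2\int_t^T\sE[\phi_2(s,\Theta_s,\tilde\Theta_s)]\,\d s.
\end{align*}
The McKean--Vlasov dependence causes no extra trouble here, because the monotonicity inequality is stated under expectation so that the law-dependence is absorbed directly into the pairing.

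For well-posedness on an arbitrary horizon I would use the method of continuity in the spirit of Peng--Wu/Hu--Peng, interpolating the coefficients between the given ones and a decoupled reference system; the openness of the solvability set along the interpolation parameter is supplied by the above display combined with the Lipschitz hypothesis \eqref{eq:b_sigma_lip} in case (1), or \eqref{eq:f_g_lip} in case (2), while uniqueness is the same computation with $\xi=\tilde\xi$. For the Lipschitz bound on the decoupling field, I apply the above to $X^{t,\xi},X^{t,\tilde\xi}$ and use Young's inequality on the left-hand side. In case (1), with $\b_2>0$, the $\phi_2$-term combined with \eqref{eq:b_sigma_lip} and Gr\"onwall's lemma gives $\|X^{t,\xi}-X^{t,\tilde\xi}\|_{\cS^2}^2\le C(\|\xi-\tilde\xi\|_{L^2}^2+\int_t^T\sE[\phi_2]\,\d s)$, and a standard BSDE stability argument then yields $\|Y^{t,\xi}-Y^{t,\tilde\xi}\|_{\cS^2}^2+\|Z^{t,\xi}-Z^{t,\tilde\xi}\|_{\cH^2}^2\le C\|\xi-\tilde\xi\|_{L^2}^2$. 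In case (2), with $\a_1,\b_1>0$, the $\phi_1$ terms directly control the deviation of the forward component, while \eqref{eq:f_g_lip} bounds the corresponding perturbations of $f$ and $g$, from which one reconstructs the backward stability. Either way, evaluating at $s=t$ and applying Cauchy--Schwarz gives the required bound $\|Y^{t,\xi}_t-Y^{t,\tilde\xi}_t\|_{L^2}\le\bar L\|\xi-\tilde\xi\|_{L^2}$. The same scheme applied with $\tilde\xi\equiv 0$, with the trivial solution replaced by a reference process absorbing the inhomogeneous terms controlled by $K$ in (H.\ref{assum:mv-fbsde_lip})(\ref{item:mv-fbsde_moment}), produces the linear-growth moment bound $\|X\|_{\cS^2}+\|Y\|_{\cS^2}+\|Z\|_{\cH^2}\le M(1+\|\xi_0\|_{L^2})$, closing the verification of the hypotheses of Theorem \ref{thm:mv-fbsde_regularity}.

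The main obstacle is handling conditions (1) and (2) by a uniform argument, since their information content is genuinely asymmetric: condition (1) controls the full triple via $\phi_2$ directly, whereas condition (2) provides only forward-type control through $\phi_1$, forcing one to recover $Y$ and $Z$ a posteriori via \eqref{eq:f_g_lip}; the subtlety is that the left-hand side of \eqref{eq:f_g_lip} freezes $(Y,Z)=(Y_2,Z_2)$, which is precisely what prevents a circular dependence between the forward and backward stability estimates. A secondary but purely technical point is that in case (2) one cannot appeal to an off-the-shelf decoupled-FBSDE stability result when passing from $X$ to $(Y,Z)$, so that step must be rebuilt by hand using only the Lipschitz constants from (H.\ref{assum:mv-fbsde_lip}) and the $\phi_1$-information produced by the monotonicity inequality.
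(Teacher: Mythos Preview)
Your proposal is correct and follows essentially the same strategy as the paper: apply It\^o's formula to $\la Y_s-\tilde Y_s,\,G(X_s-\tilde X_s)\ra$ to extract the monotonicity estimate, run the Peng--Wu continuation method for well-posedness, treat cases (1) and (2) by the asymmetric forward/backward bootstrap you describe, and deduce the moment bound by comparison with the zero process. The paper packages all of this into a single stability lemma for the $\lambda$-interpolated system \eqref{eq:moc} with inhomogeneous forcing terms, which is then specialized to obtain uniqueness, the continuation step, stochastic stability, and the moment bound in one stroke; in case (2) it does in fact invoke an off-the-shelf BSDE stability estimate (\cite[Theorem 4.2.3]{zhang2017}) rather than rebuilding it by hand, so your last remark is overly cautious.
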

\begin{proof}
%Under the (H.\ref{assum:mv-fbsde_lip})  and (H.\ref{assum:mv-fbsde_mono}),
The desired 
unique solvability, stochastic stability  and the moment estimates
follow from  a stability analysis of \eqref{eq:mv-fbsde}
 under the  monotonicity condition
and the 
 continuation method as in \cite{peng1999,bensoussan2015},
whose detailed steps 
can be found in Appendix \ref{sec:proof_mono}.
% of the arXiv version arXiv:2009.08175.
%
\end{proof}
\begin{Remark}
%It is well-known that a suitable monotonicity condition  is essential for the well-posedness of  coupled FBSDEs (see e.g. \cite{peng1999,zhang2017}) and MV-FBSDEs (see e.g. \cite{bensoussan2015}) with an arbitrary terminal time $T$, 
%which is worth a detailed discussion.

The monotonicity condition in Corollary \ref{cor:mono}
%(H.\ref{assum:mv-fbsde_mono})
 is a natural generalization of
the well-known $G$-monotonicity condition in the existing literature
(see (H2.2) in \cite{peng1999} 
for FBSDEs
or 
 Assumption (A.1) in \cite{bensoussan2015}
for  MV-FBSDEs), which corresponds to the case
 where   $G\in \sR^{m\t n}$ is  a full-rank matrix, 
$\phi_1(X_1,X_2)=\|G(X_1-X_2)\|^2_{L^2}$ and 
 $\phi_2(t,\Theta_1,\Theta_2)=\|G^*(Y_1-Y_2)\|^2_{L^2}+\|G^*(Z_1-Z_2)\|^2_{L^2}$.
% In this case, 
% if $n\ge m$, then
% one can verify 
% \eqref{eq:b_sigma_lip} by using  the Lipschitz continuity of $b,\sigma$ 
% and the fact that $|v|=|(GG^*)^{-1}GG^*v|\le C|G^*v|$ for all $v\in \sR^{m\t l}$ and $l\in \sN$,
% while if $m\ge n$, then  one can verify 
% \eqref{eq:f_g_lip} by using the Lipschitz continuity of $f,g$ 
% and the fact that $|v|=|(G^*G)^{-1}G^*Gv|\le C|Gv|$ for all $v\in \sR^{n\t l}$ and $l\in \sN$.
  
 More importantly, the generalized monotonicity condition can be applied to many  FBSDEs arising from 
 control problems 
whose coefficients enjoy specific structural conditions but fail to satisfy  the $G$-monotonicity condition.
% in \cite{peng1999,bensoussan2015,zhang2017}.
For example, 
one can consider MV-FBSDEs with $n=m$, $b(t,X,Y,\sP_{(X,Y)})=b(t,X,\hat{\a}(t,X,Y,\sP_{(X,Y)}))$
and $\sigma(t,X,Y,\sP_{(X,Y)})=\sigma(t,X,\sP_{X})$
arising from applying
the stochastic maximum principle
to  extended mean field control problems
(see e.g.~\cite{carmona2015,reisinger2020_mfc}).
In this case, 
the coefficients in general do not satisfy 
the $G$-monotonicity condition by virtue of the non-monotonicity of the function $\hat{\a}$.
However,
by choosing $G=\sI_n$ and $\phi_2=\|\hat{\a}(t,X_1,Y_1,\sP_{(X_1,Y_1)})-\hat{\a}(t,X_2,Y_2,\sP_{(X_2,Y_2)})\|^2_{L^2}$,
%in (H.\ref{assum:mv-fbsde_mono}),
the generalized monotonicity condition 
can still be satisfied 
under natural convexity conditions;
see \cite[Proposition 3.3]{reisinger2020_mfc}
for details.
%We refer the reader to 
%(H3.2) in \cite{peng1999}
%for a further application of a special case of
%(H.\ref{assum:mv-fbsde_mono}) with  $\phi_2=\sE[|B(Y_1-Y_2)+C(Z_1-Z_2)|^2]$
%to FBSDEs with drift $b(t,x,y,z)=b(t, x,By,Cz)$.
\end{Remark}

\section{Proof of Theorem \ref{thm:mv-fbsde_regularity}}\l{sec:proof_regularity}

In this section, we prove the $L^p$-path regularity results given in Section \ref{sec:mv-fbsde}.
Throughout this proof,
let $(X,Y,Z) \in  \cS^2(\sR^n) \t \cS^2(\sR^m) \t \cH^2(\sR^{m\t d})$ 
be a given solution to   \eqref{eq:mv-fbsde}
with  the  decoupling field 
$v :[0,T ] \t \sR^n\to \sR^m$.
For notational simplicity, we define the following functions 
$\tilde{b}: [0,T]\t  \sR^n\to \sR^n$,
$\tilde{\sigma}: [0,T]\t  \sR^n\to \sR^{n\t d}$,
$\tilde{f}: [0,T]\t  \sR^n\t \sR^m\t \sR^{m\t d}\to \sR^m$
and $\tilde{g}:   \sR^n\to \sR^{m}$: 
for all $(t,x,y,z)\in [0,T] \t \sR^n\t \sR^m\t \sR^{m\t d}$,
\begin{alignat*}{2}
\tilde{b}(t,x)&\coloneqq b(t,x,v(t,x),\sP_{(X_t,Y_t,Z_t)}), \q 
&&\tilde{\sigma}(t,x)\coloneqq \sigma(t,x,v(t,x),\sP_{(X_t,Y_t,Z_t)}),
\\
\tilde{f}(t,x,y,z)&\coloneqq f(t,x,y,z,\sP_{(X_t,Y_t,Z_t)}),
\q 
&&\tilde{g}(x)\coloneqq g(x,\sP_{X_T}).
\end{alignat*}
Then it is clear that  $(X,Y,Z) $
satisfies the following FBSDE:
\begin{subequations}\label{eq:fbsde}
\begin{align}
\d X_t&=\tilde{b}(t,X_t)\,\d t +\tilde{\sigma} (t,X_t)\, \d W_t, 
\q X_0=\xi_0,
\label{eq:fbsde_fwd}
\\
\d Y_t&=-\tilde{f}(t,X_t,Y_t,Z_t)\,\d t+Z_t\,\d W_t,
\q
 Y_T=\tilde{g}(X_T).
 \label{eq:fbsde_bwd}
\end{align}
\end{subequations}

The following lemma presents several regularity properties of the functions $(\tilde{b},\tilde{\sigma},\tilde{f},\tilde{g})$.
Note that these functions are in general \textit{discontinuous} in time,   due to the lack of time regularity of the decoupling field $v$ and the flow 
$t\mapsto \sP_{Z_t}$.

\begin{Lemma}\l{lemma:lipschitz_coefficients}
 Assume the setting in the Theorem \ref{thm:mv-fbsde_regularity}.
Then 
there exists a constant $C$ such that
$(\tilde{b},\tilde{\sigma},\tilde{f},\tilde{g})$ are 
 measurable functions
 which are 
$C$-Lipschitz continuous with respect to the spatial variables 
uniformly with respect to $t$,
and satisfy that
$
\|\tilde{b}(\cdot, 0)\|_{L^2(0,T)}
+\|\tilde{\sigma}(\cdot, 0)\|_{L^\infty(0,T)}+
+\|\tilde{f}(\cdot,0,0,0)\|_{L^2(0,T)}
+|\tilde{g}(0)|
\le 
C(1+\|\xi_0\|_{L^2})
$.
\end{Lemma}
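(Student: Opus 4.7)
The plan is to verify the three claimed properties -- measurability, uniform-in-$t$ spatial Lipschitz continuity, and the integrability bound at zero -- in turn, by unfolding the definitions of $(\tilde b,\tilde\sigma,\tilde f,\tilde g)$ and combining (H.\ref{assum:mv-fbsde_lip}) with the Lipschitz decoupling field $v$ and the standing moment estimate on $(X,Y,Z)$ from Theorem~\ref{thm:mv-fbsde_regularity}(2).

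Measurability is immediate: progressive measurability of $(X,Y,Z)$ together with the continuity of $U\mapsto \sP_U$ from $L^2(\Om)$ into $(\cP_2,\cW_2)$ makes $t\mapsto \sP_{(X_t,Y_t,Z_t)}$ Borel measurable, and composition with the measurable functions $v$ and $(b,\sigma,f,g)$ yields the claim. Spatial Lipschitz continuity then follows from one-line triangle-inequality applications of (H.\ref{assum:mv-fbsde_lip})(1) combined with the $L_v$-Lipschitz property of $v(t,\cdot)$: for instance $|\tilde b(t,x)-\tilde b(t,x')|\le L(1+L_v)|x-x'|$, and analogously for $\tilde\sigma$; the bounds for $\tilde f$ and $\tilde g$ are immediate since their spatial arguments do not involve $v$. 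All constants are independent of $\xi_0$, depending only on $L$ and $L_v$.

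For the integrability bounds, the key preliminary step is a uniform-in-$t$ estimate on $|v(t,0)|$. Using the identity $Y_t(\om) = v(t,X_t(\om))$ (which holds $\sP$-a.s.\ jointly for all $t$ by definition of the decoupling field) and the Lipschitz property of $v(t,\cdot)$, I deduce $|v(t,0)|\le L_v|X_t(\om)|+|Y_t(\om)|$ a.s.; taking $L^2(\Om)$-norms of the (deterministic) left-hand side then gives $|v(t,0)|\le \sqrt{2}(L_v\|X_t\|_{L^2}+\|Y_t\|_{L^2})\le C(1+\|\xi_0\|_{L^2})$ uniformly in $t$. Given this, the bounds on $\|\tilde b(\cdot,0)\|_{L^2(0,T)}$, $\|\tilde f(\cdot,0,0,0)\|_{L^2(0,T)}$ and $|\tilde g(0)|$ follow by comparing with the respective values at $(0,0,0,\bm{\delta}_0)$, invoking (H.\ref{assum:mv-fbsde_lip})(2) for the reference terms, and using the Wasserstein bound $\cW_2(\sP_{(X_t,Y_t,Z_t)},\bm{\delta}_{0})^2\le \sE[|X_t|^2+|Y_t|^2+|Z_t|^2]$ together with the hypothesis $\|X\|_{\cS^2}+\|Y\|_{\cS^2}+\|Z\|_{\cH^2}\le M(1+\|\xi_0\|_{L^2})$.

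The main obstacle is the $L^\infty$-in-$t$ bound on $\tilde\sigma(\cdot,0)$. The uniform constant $K$ in (H.\ref{assum:mv-fbsde_lip})(2) is postulated only for reference measures of the specific product form $\bm{\delta}_{0_{n+m}}\t\mu$, so the natural comparison is not with $\sigma(t,0,0,\bm{\delta}_0)$ but with $\sigma(t,0,0,\bm{\delta}_{0_{n+m}}\t\sP_{Z_t})$. The Lipschitz gap then involves $|v(t,0)|$, already bounded uniformly, and $\cW_2(\sP_{(X_t,Y_t,Z_t)},\bm{\delta}_{0_{n+m}}\t\sP_{Z_t})$, which is bounded by $(\sE[|X_t|^2+|Y_t|^2])^{1/2}$ via the coupling that keeps $Z_t$ fixed; the reference term contributes at most $K$. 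Combining these delivers $|\tilde\sigma(t,0)|\le C(1+\|\xi_0\|_{L^2})$ uniformly in $t\in[0,T]$. This comparison is essential precisely because $t\mapsto \sP_{Z_t}$ has no better than $L^2$-integrability in time a priori, so a naive comparison against a $t$-independent reference measure would only yield an $L^2(0,T)$ bound on $\tilde\sigma(\cdot,0)$, not the required $L^\infty$ bound.
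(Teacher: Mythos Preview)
Your proposal is correct and follows essentially the same route as the paper's proof: the same bound on $\sup_t|v(t,0)|$ via $Y_t=v(t,X_t)$, the same comparison of $\tilde b,\tilde f,\tilde g$ at zero against the reference point $(0,0,0,\bm{\delta}_0)$, and---crucially---the same device for $\tilde\sigma$, namely comparing with $\sigma(t,0,0,\bm{\delta}_{0_{n+m}}\t\sP_{Z_t})$ so as to invoke the uniform-in-$\mu$ bound in (H.\ref{assum:mv-fbsde_lip})(2) and avoid the time-irregularity of $t\mapsto\sP_{Z_t}$. The only minor difference is the measurability justification: the paper appeals to \cite[Lemma 2.2]{bandini2016} (checking that $t\mapsto\sE[\phi(X_t,Y_t,Z_t)]$ is measurable for continuous $\phi$ of quadratic growth), whereas you argue via measurability of $t\mapsto(X_t,Y_t,Z_t)\in L^2(\Om)$ composed with the continuous law map; both are valid, the paper's being slightly more careful about the fact that $t\mapsto Z_t$ need not be continuous into $L^2(\Om)$.
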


\begin{proof}

Note that \cite[Lemma 2.2]{bandini2016} shows that 
the mapping $[0,T]\ni t\mapsto {\sP_{(X_t,Y_t,Z_t)}}\in \cP_2(\sR^{n}\t \sR^m\t \sR^{m\t d})$ is measurable 
if  for all continuous function $\phi:\sR^n\t \sR^m\t \sR^{m\t d}\to \sR$ with quadratic growth,
the map $[0,T]\ni t\mapsto \sE[\phi(X_t,Y_t,Z_t)]\in \sR$  is measurable,
which holds 
in the present setting
due to the fact that $(X,Y,Z)\in  \cS^2(\sR^n) \t \cS^2(\sR^m) \t \cH^2(\sR^{m\t d})$ and Fubini's theorem.
%Since the  functions $(b, \sigma, f, g)$  are measurable,
%continuous in their measure components, 
Then, we can deduce
from the measurability of the functions $(b, \sigma, f, g)$
 that 
the functions $(\tilde{b},\tilde{\sigma},\tilde{f},\tilde{g})$ are measurable.
The $L_v$-Lipschitz continuity of the decoupling field $v$ and the $L$-Lipschitz continuity of the functions $(b, \sigma, f, g)$ in (H.\ref{assum:mv-fbsde_lip}) imply that 
the functions
$(\tilde{b},\tilde{\sigma},\tilde{f},\tilde{g})$  are  Lipschitz continuous with respect to their spatial variables, 
uniformly with respect to $t$.
Hence, it remains to show the integrability condition. 
Note that the Lipschitz continuity of the decoupling field $v$ and H\"{o}lder's inequality imply  that
\begin{align}\l{eq:v_bdd}
\begin{split}
\sup_{t\in [0,T]}|v(t,0)|&\le \sup_{t\in [0,T]}(\sE[|Y_t|]+\sE[|v(t,0)-v(t,X_t)|])
\\
&
\le \|Y\|_{\cS^2}+L_v\|X\|_{\cS^2}
\le C(1+\|\xi_0\|_{L^2}).
\end{split}
\end{align}
Thus we can obtain
for a.e.~$t\in [0,T]$   that 
\begin{align}\l{eq:bf_0}
\begin{split}
&|\tilde{b}(t, 0)|
+|\tilde{f}(t,0,0,0)|
=
|b(t,0,v(t,0),\sP_{(X_t,Y_t,Z_t)})|
+|{f}(t,0,v(t,0),0,\sP_{(X_t,Y_t,Z_t)})|
\\
&\le
|b(t,0,0,\bm{\delta}_{{0}_{n+m+md}})|
+|{f}(t,0,0,0,\bm{\delta}_{{0}_{n+m+md}})|
+C(|v(t,0)|+\cW_2(\sP_{(X_t,Y_t,Z_t)},\bm{\delta}_{{0}_{n+m+md}}))
\\
&\le
C(
|b(t,0,0,\bm{\delta}_{{0}_{n+m+md}})|
+|{f}(t,0,0,0,\bm{\delta}_{{0}_{n+m+md}})|
+
1+\|\xi_0\|_{L^2}+\|(X_t,Y_t,Z_t)\|_{L^2}),
\end{split}
\end{align}
which together with the assumption that 
$\|X\|_{\cS^2}+\|Y\|_{\cS^2}+\|Z\|_{\cH^2}\le M(1+\|\xi_0\|_{L^2})$
 gives us that 
$$
\|\tilde{b}(\cdot, 0)\|_{L^2(0,T)}
+\|\tilde{f}(\cdot,0,0,0)\|_{L^2(0,T)}
\le 
C(1+\|\xi_0\|_{L^2}+\|(X,Y,Z)\|_{\cH^2})
\le 
C(1+\|\xi_0\|_{L^2}).
$$
Similarly, 
by setting $\bm{0}_{n+m}$ to be the $\sR^{n}\t \sR^m$-valued zero random variable,
we have for a.e.~$t\in[0,T]$ that
$\sP_{(\bm{0}_{n+m},Z_t)}=\bm{\delta}_{{0}_{n+m}}\t \sP_{Z_t}$
and hence that
\begin{align*}
&|\tilde{\sigma}(t, 0)|
=|\sigma(t,0,v(t,0),\sP_{(X_t,Y_t,Z_t)})|
\\
&\le
|\sigma(t,0,0, \sP_{(\bm{0}_{n+m},Z_t)})|
+C(|v(t,0)|+\cW_2(\sP_{(X_t,Y_t,Z_t)},\sP_{(\bm{0}_{n+m},Z_t)}))
\\
&\le
C(1+\|\xi_0\|_{L^2}+\|(X_t,Y_t)\|_{L^2}),
\end{align*}
which implies that $\|\tilde{\sigma}(\cdot, 0)\|_{L^\infty(0,T)}\le C(1+\|\xi_0\|_{L^2}+\|(X,Y)\|_{\cS^2})\le C(1+\|\xi_0\|_{L^2})$.
This shows the desired integrability conditions and finishes the proof.
\end{proof}

The Lipschitz continuity of the functions $(\tilde{b},\tilde{\sigma})$ implies that
the unique solution $X$ to 
 \eqref{eq:fbsde_fwd} is
 Malliavin differentiable.
This proof naturally  extends \cite[Theorem 2.2.1]{nualart2006}
to \eqref{eq:fbsde_fwd} 
(whose 
initial condition is random and
coefficients 
 are merely integrable in time)
 and hence is omitted.

\begin{Proposition}\l{prop:X_M_derivative}
 Assume the setting in the Theorem \ref{thm:mv-fbsde_regularity}.
Then it holds for all $t\in [0,T]$ that $X_t\in \sD^{1,2}(\sR^n)$, and 
the derivative
$DX=(D X^{(1)},\ldots, D X^{(d)})$,
which is $\sR^{n\t d}$-valued,
% admits a version 
satisfies
for $0\le t<s\le T$ that $D_s X_t=0$ and 
for  $0\le s\le t\le T$ that
\begin{align}\l{eq:X_M_derivative}
D_s X_t&=
 \tilde{\sigma} (s,X_s)+
\int_s^t \p \tilde{b}_r D_s X_r \, \d r +
\sum_{k=1}^d
\int_s^t \p \tilde{\sigma}^{(k)}_rD_s X_r\, \d W^{(k)}_{r},
\end{align}
where $\{ \p \tilde{b},(\p \tilde{\sigma}^{(k)})_{k=1}^d\}\subset \cS^\infty(\sR^{n\t n})$
are  %some   processes 
  uniformly bounded by 
  some constant $C$.
\end{Proposition}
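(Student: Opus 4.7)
The plan is to adapt the Picard iteration argument underlying \cite[Theorem 2.2.1]{nualart2006} to our setting, accounting both for the random initial data $\xi_0$ and the merely Borel-in-time coefficients $(\tilde{b}, \tilde{\sigma})$ provided by Lemma \ref{lemma:lipschitz_coefficients}. Set $X^0_t := \xi_0$ and iteratively define $X^{n+1}_t := \xi_0 + \int_0^t \tilde{b}(s, X^n_s)\,\d s + \int_0^t \tilde{\sigma}(s, X^n_s)\,\d W_s$. Standard Bochner and It\^o estimates, using only the Lipschitz-in-space and $L^2$-in-time bounds from Lemma \ref{lemma:lipschitz_coefficients}, yield $X^n \in \cS^2(\sR^n)$ with $X^n \to X$ in $\cS^2$.

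I would then show by induction on $n$ that $X^n_t \in \sD^{1,2}(\sR^n)$ for every $t \in [0,T]$. The base case uses that $\xi_0$ is $\cF_0$-measurable and $\cF_0$ is independent of the Brownian filtration, so $D\xi_0 = 0$. For the induction step, since $\tilde{b}(s,\cdot)$ and $\tilde{\sigma}(s,\cdot)$ are $C$-Lipschitz uniformly in $s$, the Lipschitz chain rule \cite[Prop.\ 1.2.4]{nualart2006} applied pointwise in $s$ with a measurable selection produces $\sR^{n\t n}$-valued processes $\p \tilde{b}^n, \p \tilde{\sigma}^{n,(k)}$, each bounded by $C$, such that $D_r \tilde{b}(s, X^n_s) = \p \tilde{b}^n_s\, D_r X^n_s$ for $r \le s$ (and vanishes for $r > s$), with the analogous identity for $\tilde{\sigma}^{(k)}$. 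Combined with the commutation of $D$ with Bochner and It\^o integration, this yields $X^{n+1}_t \in \sD^{1,2}$ satisfying the analogue of \eqref{eq:X_M_derivative} at level $n$, with $D_s X^{n+1}_t = 0$ for $s > t$.

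To pass to the limit, I would apply a Gr\"onwall argument to the linear equation for $D_s X^n_t$, using the uniform bound $|\p \tilde{b}^n|, |\p \tilde{\sigma}^{n,(k)}| \le C$ and the $L^\infty$-in-time bound on $\tilde{\sigma}(\cdot, 0)$ from Lemma \ref{lemma:lipschitz_coefficients}, to show $\sup_n \sup_{0 \le s \le t \le T} \sE[|D_s X^n_t|^2] \le C$. Since $X^n_t \to X_t$ in $L^2(\Om)$ and $(DX^n_t)_n$ is bounded in $L^2(\Om \t [0,T])$, closability of the Malliavin derivative \cite[Prop.\ 1.2.3]{nualart2006} gives $X_t \in \sD^{1,2}$ with $DX^n_t \rightharpoonup DX_t$ weakly. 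Extracting weak-$\ast$ limits $\p \tilde{b}, \p \tilde{\sigma}^{(k)} \in \cS^\infty(\sR^{n\t n})$ of the uniformly bounded sequences $(\p \tilde{b}^n), (\p \tilde{\sigma}^{n,(k)})$, and passing to the limit in the level-$n$ equation, then yields \eqref{eq:X_M_derivative}.

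The principal difficulty lies in the limit passage for the products $\p \tilde{b}^n_r D_s X^n_r$ and $\p \tilde{\sigma}^{n,(k)}_r D_s X^n_r$, where one factor converges only weakly. I would handle this by upgrading the convergence of $(DX^n_t)_n$ to strong convergence in $L^2(\Om \t [0,T])$ via a Gr\"onwall estimate on the difference equation satisfied by $DX^{n+1} - DX^n$, exploiting the uniform boundedness of $(\p \tilde{b}^n, \p \tilde{\sigma}^{n,(k)})$ and the strong convergence $\tilde{\sigma}(s, X^n_s) \to \tilde{\sigma}(s, X_s)$ in $L^2$. Combined with the weak-$\ast$ convergence of $(\p \tilde{b}^n, \p \tilde{\sigma}^{n,(k)})$ in $L^\infty$, this identifies the product limits and closes the argument.
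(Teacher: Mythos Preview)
Your overall strategy---Picard iteration, uniform $\sD^{1,2}$ bounds along the sequence, and closability via \cite[Lemma 1.2.3]{nualart2006}---is exactly the extension of \cite[Theorem 2.2.1]{nualart2006} that the paper has in mind (the paper omits the proof and simply cites this reference). The first three paragraphs are fine and correctly yield $X_t\in\sD^{1,2}(\sR^n)$ for every $t$.

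The gap is in your last paragraph. The Gr\"onwall argument on the difference $D_sX^{n+1}-D_sX^{n}$ does not close for merely Lipschitz coefficients: writing
\[
\p\tilde{b}^n_r\,D_sX^n_r-\p\tilde{b}^{n-1}_r\,D_sX^{n-1}_r
=\p\tilde{b}^n_r\big(D_sX^n_r-D_sX^{n-1}_r\big)+\big(\p\tilde{b}^n_r-\p\tilde{b}^{n-1}_r\big)D_sX^{n-1}_r,
\]
the second term is not controlled, since the bounded ``weak gradients'' produced by the Lipschitz chain rule at $X^n_r$ and $X^{n-1}_r$ need not be close even when $X^n_r-X^{n-1}_r$ is small. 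For the same reason, separately extracting a weak-$*$ limit of $(\p\tilde{b}^n)$ and then trying to identify the limit of the product is fragile: the weak-$*$ limit is not canonical and there is no \emph{a priori} reason it satisfies $\p\tilde{b}_r\,D_sX_r=D_s[\tilde{b}(r,X_r)]$.

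The fix is to skip the product-limit entirely. Once you have established $X_t\in\sD^{1,2}$ for every $t$ (which you do), apply the Malliavin derivative $D$ directly to the limiting equation $X_t=\xi_0+\int_0^t\tilde{b}(r,X_r)\,\d r+\int_0^t\tilde{\sigma}(r,X_r)\,\d W_r$. The Lipschitz chain rule \cite[Proposition 1.2.4]{nualart2006} applied to $\tilde{b}(r,\cdot)$ and $\tilde{\sigma}^{(k)}(r,\cdot)$ at $X_r$ produces the bounded, adapted processes $\p\tilde{b},\p\tilde{\sigma}^{(k)}\in\cS^\infty(\sR^{n\t n})$ in one step, and commuting $D$ with the Lebesgue and It\^o integrals gives \eqref{eq:X_M_derivative} immediately. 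This is precisely how the paper derives the analogous equation \eqref{eq:YZ_M_derivative} for $(Y,Z)$ in the proof of Proposition \ref{prop:YZ_M_derivative}.
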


We then establish the Malliavin differentiability of the processes $Y,Z$
in Theorem \ref{thm:mv-fbsde_regularity},
which extends \cite[Proposition 5.9]{karoui1997}
to BSDEs with non-differentiable coefficients.

\begin{Proposition}\l{prop:YZ_M_derivative}
 Assume the setting in the Theorem \ref{thm:mv-fbsde_regularity}.
Then 
it holds for a.e.~$t\in [0,T]$ that $(Y_t,Z_t)\in \sD^{1,2}(\sR^m)\t \sD^{1,2}(\sR^{m\t d})$, and 
the derivatives
$DY=(D Y^{(1)},\ldots, D Y^{(d)})$ and $D Z=(D Z^{(1)},\ldots, D Z^{(d)})$,
which are $\sR^{m\t d}$ and  $\sR^{(m\t d)\t d}$-valued, respectively,
%admit a version 
 satisfy
for $0\le t<s\le T$ that $D_s Y_t=D_sZ_t=0$ and 
for $0\le s\le t\le T$ that
\begin{align}\l{eq:YZ_M_derivative}
D_s Y^{(j)}_t&=
\p\tilde{g}_TD_s X^{(j)}_T
+\int_t^T
\p{\tilde{f}}_r\cdot D_s \Theta^{(j)}_r \, \d r-
\int_t^T D_s Z^{(j)}_r\,\d W_{r},
\q  j=1,\ldots, d
\end{align}
with 
$\p\tilde{f}_r\cdot D_s\Theta^{(j)}_r
\coloneqq
\p_x\tilde{f}_rD_s X^{(j)}_r
+\p_y\tilde{f}_rD_s Y^{(j)}_r
%\\
%&\quad 
+
 \p_z\tilde{f}_r D_s Z^{(j)}_r
$,
where 
$DX$ is the Malliavin derivative of $X$,
% in Proposition \ref{prop:X_M_derivative},
and 
the random variable
$\p \tilde{g}_T\in L^2(\cF_T;\sR^{m\t n})$ 
%is uniformly bounded by $L$,
%\begin{align*}
%\p\tilde{f}_r\cdot D_s\Theta^{(j)}_r
%&\coloneqq
%\p_x\tilde{f}_rD_s X^{(j)}_r
%+\p_y\tilde{f}_rD_s Y^{(j)}_r
%%\\
%%&\quad 
%+\begin{pmatrix}
%\la \p_z\tilde{f}^1_r,D_s Z^{(j)}_r\ra,
% \ldots,
%\la \p_z\tilde{f}^m_r,D_s Z^{(j)}_r\ra 
%\end{pmatrix}^*,
%\end{align*}
and the processes
$\p_x \tilde{f}\in \cS^\infty(\sR^{m\t n})$,
$\p_y \tilde{f}\in \cS^\infty(\sR^{m\t m})$,
$\p_z \tilde{f}\in \cS^\infty(\sR^{m\t (m\t d)})$
are %some   processes 
  uniformly bounded by some constant $C$.
Moreover, 
it holds for $\d \sP\otimes \d t$ a.e.~that 
$D_tY_t=Z_t$.
\end{Proposition}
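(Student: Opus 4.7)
The classical \cite[Proposition 5.9]{karoui1997} assumes $C^1$ coefficients, whereas here $(\tilde{f},\tilde{g})$ are only Lipschitz in space and $\tilde{f}$ is merely $L^2$ in time (by Lemma \ref{lemma:lipschitz_coefficients}), so it must be extended via an approximation argument. My plan is to mollify the coefficients in the spatial variables, apply the classical result to the smoothed BSDE, and pass to the limit using BSDE stability together with the closability of the Malliavin derivative.

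For each $\eps>0$, let $\tilde{g}^\eps$ and $\tilde{f}^\eps(t,\cdot,\cdot,\cdot)$ be spatial mollifications of $\tilde{g}$ and $\tilde{f}(t,\cdot,\cdot,\cdot)$, respectively. These are $C^\infty$ in their spatial variables with Lipschitz constants uniformly bounded by the $C$ from Lemma \ref{lemma:lipschitz_coefficients}, and they converge to $(\tilde{f},\tilde{g})$ locally uniformly. Classical well-posedness yields a unique $(Y^\eps,Z^\eps)\in\cS^2(\sR^m)\t\cH^2(\sR^{m\t d})$ for \eqref{eq:fbsde_bwd} with coefficients $(\tilde{f}^\eps,\tilde{g}^\eps)$, and routine BSDE stability gives $(Y^\eps,Z^\eps)\to(Y,Z)$ in $\cS^2\t\cH^2$. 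Because \cite[Proposition 5.9]{karoui1997} extends to merely time-integrable coefficients through the Picard iteration underlying its proof, it applies to the mollified equation and produces $(Y^\eps_t,Z^\eps_t)\in\sD^{1,2}\t\sD^{1,2}$ together with a linear BSDE analogous to \eqref{eq:YZ_M_derivative} whose coefficients are $\p\tilde{g}^\eps(X_T)$, $\p_x\tilde{f}^\eps$, $\p_y\tilde{f}^\eps$, $\p_z\tilde{f}^\eps$ evaluated along $(X,Y^\eps,Z^\eps)$.

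Next I would extract bounded weak-$*$ limits: along a subsequence $\p\tilde{g}^\eps(X_T)$ converges weakly-$*$ in $L^\infty(\cF_T;\sR^{m\t n})$ to some $\p\tilde{g}_T$, and similarly the three partials $\p_x\tilde{f}^\eps, \p_y\tilde{f}^\eps, \p_z\tilde{f}^\eps$ (evaluated along $(X,Y^\eps,Z^\eps)$) converge weakly-$*$ in $L^\infty(\Om\t[0,T])$ to bounded $\sF$-progressively measurable limits. Standard linear-BSDE estimates yield uniform $\cS^2\t\cH^2$ bounds on $(D_sY^\eps,D_sZ^\eps)$, so passage to the limit in the linear BSDE produces $(U,V)\in\cS^2\t\cH^2$ solving \eqref{eq:YZ_M_derivative}. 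The closability of $D$ on $L^2(\Om)$ together with $(Y^\eps,Z^\eps)\to(Y,Z)$ in $\cS^2\t\cH^2$ then forces $(U,V)=(DY,DZ)$. The identity $D_tY_t=Z_t$ is obtained by writing \eqref{eq:fbsde_bwd} in the forward form $Y_s=Y_0-\int_0^s\tilde{f}(r,\Theta_r)\,\d r+\int_0^s Z_r\,\d W_r$, applying $D_t$ for $t\le s$ (whereby the stochastic-integral contribution yields $Z_t+\int_t^s D_tZ_r\,\d W_r$), and evaluating at $s=t$, exactly as in \cite[Proposition 5.3]{karoui1997}.

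The main technical obstacle I anticipate is the passage to the limit in the nonlinear composition $\p_z\tilde{f}^\eps(r,X_r,Y^\eps_r,Z^\eps_r)D_sZ^\eps_r$: weak-$*$ convergence of the prefactor together with only weak convergence of $D_sZ^\eps_r$ would be insufficient to identify the limit. This is resolved by exploiting the $\cH^2$-strong convergence $Z^\eps\to Z$ to extract a further subsequence along which convergence is $\d\sP\otimes\d t$-a.e., and then using dominated convergence so that the weak-$*$ limit of the prefactor can be identified with an honest (bounded) derivative-type coefficient evaluated along the limiting $(X,Y,Z)$. A parallel argument deals with the $\p\tilde{g}^\eps(X_T)D_sX_T^{(j)}$ terminal term.
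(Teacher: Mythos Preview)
Your overall strategy (mollify in space, invoke the smooth theory, pass to the limit) matches the paper's, but the order of operations differs in a way that matters, and the gap you flag in the final paragraph is real and not closed by your proposed fix.

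You want to pass to the limit \emph{in the linear BSDE for the Malliavin derivatives} and then invoke closability to identify the limit $(U,V)$ with $(DY,DZ)$. The obstacle is exactly the product $\p_z\tilde{f}^\eps(r,X_r,Y^\eps_r,Z^\eps_r)\,D_sZ^\eps_r$: even after extracting a subsequence along which $(Y^\eps_r,Z^\eps_r)\to(Y_r,Z_r)$ $\d\sP\otimes\d t$-a.e., you cannot conclude that $\p_z\tilde{f}^\eps(r,X_r,Y^\eps_r,Z^\eps_r)$ converges a.e.\ to anything, because the mollified gradients $\nabla\tilde f^\eps$ converge to the weak gradient only Lebesgue-a.e.\ in $(x,y,z)$, and without absolute continuity of the law of $(X_r,Y_r,Z_r)$ you have no control at the random evaluation point. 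So ``dominated convergence of the prefactor'' is not available, and the weak--weak product remains unidentified.

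The paper sidesteps this by reversing the two steps. First, from the uniform bound
\[
\sup_{\eps>0}\,\sE\Big[\sup_{s\le r\le T}|D_sY^\eps_r|^2\Big]+\sE\Big[\int_s^T|D_sZ^\eps_r|^2\,\d r\Big]<\infty
\]
together with $Y^\eps_t\to Y_t$ in $L^2$, Nualart's closure lemma (\cite[Lemma~1.2.3]{nualart2006}) already gives $Y_t\in\sD^{1,2}$; for $Z$ the paper passes through the auxiliary martingale $M=\int_0^TZ_r\,\d W_r$ (so that $M^\eps\to M$ in $L^2$ with uniformly bounded $\sD^{1,2}$-norm) and then invokes \cite[Lemma~1.3.4]{nualart2006}. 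No limit in the linear BSDE is taken at this stage. Second, knowing $(X_t,Y_t,Z_t)\in\sD^{1,2}$, one applies the Lipschitz chain rule \cite[Proposition~1.2.4]{nualart2006} \emph{directly to the original equation} \eqref{eq:fbsde_bwd}. That chain rule produces bounded measurable coefficients $\p\tilde g_T,\p_x\tilde f,\p_y\tilde f,\p_z\tilde f$ as weak limits of $\nabla\tilde g^\eps(X_T)$ and $\nabla_{xyz}\tilde f^\eps(\cdot,\Theta_\cdot)$ evaluated at the \emph{fixed} $\Theta$ (not $\Theta^\eps$), so no product-of-weak-limits issue arises. Your derivation of $D_tY_t=Z_t$ is the same as the paper's.
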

\begin{proof}
%The statement is an analogue of Proposition \ref{prop:X_M_derivative}
%for BSDEs.
By using the Lipschitz continuity of    $( \tilde{b},\tilde{\sigma}, \tilde{f}, \tilde{g})$ in their spatial variables, 
we can obtain by using 
the standard mollification argument 
a sequence of coefficients $(\tilde{b}^\eps,\tilde{\sigma}^\eps, \tilde{f}^\eps, \tilde{g}^\eps)_{\eps>0}$
that  
 converge pointwise to
$(\tilde{b},\tilde{\sigma}, \tilde{f}, \tilde{g})$  as $\eps\to 0$,
and 
are smooth  and $L$-Lipschitz continuous with respect to the spatial variables.
For each $\eps>0$, let $\Theta^\eps=(X^\eps,Y^\eps, Z^\eps)$ be the solution to \eqref{eq:fbsde}
with coefficients $(\tilde{b},\tilde{\sigma},  \tilde{f}^\eps, \tilde{g}^\eps)$.
Then we can obtain from  standard stability results  of \eqref{eq:fbsde} that
$(X^\eps,Y^\eps,Z^\eps)\to (X,Y,Z)$ 
as $\eps\to 0$ in $\cS^2\t \cS^2\t \cH^2$
(see e.g.~\cite[Lemma 2.4(ii)]{zhang2004}).

For each $\eps>0$, 
we have $X^\eps_t\in \sD^{1,2}(\sR^{n})$ 
for all $t\in [0,T]$
and the derivative  satisfies
for  $0\le s\le t\le T$ that
\begin{align*}
D_s X^\eps_t&=
 \tilde{\sigma} ^\eps(s,X^\eps_s)+
\int_s^t  \nabla_x\tilde {b}^\eps(r,X^\eps_r) D_s X^\eps_r \, \d r +
\sum_{k=1}^d
\int_s^t  \nabla_x \tilde{\sigma}^{\eps,(k)}(r,X^\eps_r) D_s X^\eps_r\, \d W^{(k)}_{r}.
\end{align*}
Moreover,
by noticing that  the function
\begin{align*}
\sR^m\t \sR^{m\t d}\ni(y,z)\mapsto D_s\tilde{f}^\eps (t,X^\eps_t(\om),y,z)= (\nabla_x\tilde{f}^\eps) (t,X^\eps_t(\om),y,z)D_sX^\eps_t(\om)\in \sR^{m\t d},
\end{align*}
is continuous
for all $t\in [0,T]$ and a.s.~$\om\in \Om$,
and using the boundedness of the function $\nabla_x \tilde{f}^\eps$
and the fact that $\sE\left[\int_0^T\int_0^T|D_sX^\eps_t|^2\,\d t\,\d s\right]<\infty$,
we can 
extend \cite[Proposition 5.3]{karoui1997} and establish 
that $Y^\eps_t\in \sD^{1,2}(\sR^{m})$ 
for all $t\in [0,T]$,
$Z^\eps_t\in \sD^{1,2}(\sR^{m\t d})$ 
for a.e.~$t\in [0,T]$,
and the derivatives  satisfy 
for  $0\le s\le t\le T$, $j=1,\ldots, d$
 that 
\begin{align*}
%D_s X^\eps_t&=
% \tilde{\sigma} ^\eps(s,X^\eps_s)+
%\int_s^t  \nabla_x\tilde {b}^\eps(r,X^\eps_r) D_s X^\eps_r \, \d r +
%\sum_{k=1}^d
%\int_s^t  \nabla_x \tilde{\sigma}^{\eps,(k)}(r,X^\eps_r) D_s X^\eps_r\, \d W^{(k)}_{r},
%\\
D_s Y^{\eps,(j)}_t&=
(\nabla\tilde{g}^\eps)(X^\eps_T)D_s X^{\eps,(j)}_T
+\int_t^T
\nabla_{xyz}{\tilde{f}}^\eps(r,\Theta^\eps_r)\cdot D_s \Theta^{\eps,(j)}_r \, \d r-
\int_t^T D_s Z^{\eps,(j)}_r\,\d W_{r}.
\end{align*}
%Since $(D_s Y^{\eps}_t,D_s Z^{\eps}_t)_{s\le t\le T}$ are adapted proceses, 
Standard 
moment estimates of  FBSDEs 
%in Lemma \ref{lemma:fbsde_moment}  
(see e.g.~\cite[Theorem 4.4.4]{zhang2017})
show  for all $0\le s\le T$ that 
\begin{align}\l{eq:DYZ_bdd}
\begin{split}
&\sE\bigg[\sup_{s\le r\le T}|D_sY^\eps_r|^2\bigg]+\sE\bigg[\int_s^T|D_sZ^\eps_r|^2\, \d r\bigg]
\\
&\le C
\bigg(
\sE[|(\nabla\tilde{g}^\eps)(X^\eps_T)D_s X^{\eps}_T|^2]
+\sE\bigg[\int_t^T
|\nabla_{x}{\tilde{f}}^\eps(r,\Theta^\eps_r)D_s X^{\eps}_r|^2 \, \d r\bigg]
\bigg)
%\\
%&\le 
%C \big( \|\xi_0\|^2_{L^2}+ \|\tilde{b}^\eps(\cdot,0)\|^2_{L^2(0,T)}+  \|\tilde{\sigma}^\eps(\cdot,0)\|^2_{L^2(0,T)}\big)
\\
&\le
C \big( |\tilde{\sigma}(s,0)|^2+ \|\xi_0\|^2_{L^2}+ \|\tilde{b}(\cdot,0)\|^2_{L^1(0,T)}+  \|\tilde{\sigma}(\cdot,0)\|^2_{L^2(0,T)}+1\big),
\end{split}
\end{align}
where for the last inequality we have used
the  estimates of $\sE[\sup_{s\le r\le T}|D_sX^\eps_r|^2]$
and $\|X^\eps\|_{\cS^2}$,
 and
 the fact that 
$|\tilde{b}^\eps(t,0)|\le C( |\tilde{b}(t,0)|+1)$
and $|\tilde{\sigma}^\eps(t,0)|\le C( |\tilde{\sigma}(t,0)|+1)$
for all $t\in [0,T]$,
which follows from  the Lipschitz continuity of  $\tilde{b}(t,\cdot)$ and $\tilde{\sigma}(t,\cdot)$.

We now show the Malliavin differentiability of the processes $Y$ and $Z$.
For each $t\in [0,T]$, 
we have $\lim_{\eps\to 0}\|Y^\eps_t-Y_t\|_{L^2}=0$
and $\sup_{\eps>0}\sE[\int_0^T|D_s Y^\eps_t|^2\, \d s]<\infty$ (see \eqref{eq:DYZ_bdd}),
which together with \cite[Lemma 1.2.3]{nualart2006}
imply that
 $Y_t\in \sD^{1,2}(\sR^{m})$ for all $t\in [0,T]$. 
  To show the differentiability of the process $Z$,  
 we first introduce the random variable 
 $M= \int_0^T Z_r\,\d W_r$.
 The fact that $Z^\eps \to Z$ in $\cH^2$ implies that 
  $M^\eps\coloneqq \int_0^T Z^\eps_r\,\d W_r\to M$ in $L^2(\Om)$.
Moreover, 
for all $\eps>0$,
we can deduce from
the fact that $Z^\eps_t\in \sD^{1,2}(\sR^{m\t d})$ 
for a.e.~$t\in [0,T]$,
the convergence of $(Z^\eps)_{\eps>0}$ in $\cH^2$ and 
the estimate \eqref{eq:DYZ_bdd}  that 
 $M^\eps\in \sD^{1,2}(\sR^m)$ and 
\begin{align*}
&\sE\bigg[\int_0^T |D_sM^\eps|^2\, \d s\bigg]
=\sE\bigg[\int_0^T \left|Z^\eps_s+\int_s^T D_sZ^\eps_r\,\d W_r\right|^2\, \d s\bigg]
\\
&\le C\bigg(\sE\bigg[\int_0^T |Z^\eps_s|^2\, \d s\bigg]
+\sE\bigg[\int_0^T\int_s^T |D_sZ^\eps_r|^2\, \d r\,\d s\bigg]
\bigg)
\le C<\infty
\end{align*}
  with a constant $C$
 uniformly with respect to $\eps$.
This along with \cite[Lemmas 1.2.3]{nualart2006}
shows that
 $M\in \sD^{1,2}(\sR^{m})$,
and hence $Z_t \in \sD^{1,2}(\sR^{m\t d})$ for a.e.~$t\in [0,T]$
(see \cite[Lemma 1.3.4]{nualart2006}).
Therefore,
by using the  Lipschitz continuity of $(\tilde{f}$, $\tilde{g})$ and the differentiability 
of $(X,Y,Z)$,
one can easily deduce 
 the linear FBSDE \eqref{eq:YZ_M_derivative} 
by applying the operator $D$ to \eqref{eq:fbsde_bwd} and the chain rule (see \cite[Proposition 1.2.4]{nualart2006}).
In particular, the random variable  
$\p\tilde{g}_T$ 
and the process 
$\p \tilde{f}$ can be obtained as the weak limits of the sequences
$((\nabla\tilde{g}^\eps)(X_T))_{\eps>0}$ in $L^2(\cF_T;\sR^m)$
and
 $(\nabla_{xyz}{\tilde{f}}^\eps(\cdot,\Theta_\cdot))_{\eps>0}$ in $\cH^2$, respectively,
 which implies the  desired measurability.

 Finally, for each $0\le \theta<t\le T$, we have $Y_t-Y_\theta=-\int_\theta^t\tilde{f}(r,X_r,Y_r,Z_r)\,\d r+\int_\theta^t Z_r\,\d W_r$. Then for all $0\le \theta<s\le t\le T$, 
 we have $D_sY_t=-\int_s^tD_s\tilde{f}(r,X_r,Y_r,Z_r)\,\d r+Z_s+\int_s^t D_sZ_r\,\d W_r$,
 from which  we can conclude that  $D_tY_t=Z_t$ by setting $s=t$.
 \end{proof}

We then give
a more concrete representation of the process $Z$ based on the relation that $D_tY_t=Z_t$,
which is essential for   the regularity estimates of \eqref{eq:mv-fbsde}
 with non-differentiable coefficients;
{see the discussion at the end of this section for details.}
Let us introduce the processes $(\p X,\p Y,\p Z)\in \cS^2(\sR^{n\t n})\t \cS^2(\sR^{m\t n})\t \cH^2(\sR^{(m\t d)\t n})$
satisfying for all $t\in[0,T]$, $j=1,\ldots, n$ that
\begin{align}\label{eq:variation}
\begin{split}
\p X_t&=
 \sI_n+
\int_0^t \p \tilde{b}_r \p X_r \, \d r +
\sum_{k=1}^d
\int_0^t \p \tilde{\sigma}^{(k)}_r \p X_r\, \d W^{(k)}_{r},
%\l{eq:mv-fbsde_fwd}
\\
\p Y^{(j)}_t&=
\p\tilde{g}_T\p X^{(j)}_T
+\int_t^T
\p \tilde{f}_r\cdot \p \Theta^{(j)}_r \, \d r-
\int_t^T \p Z^{(j)}_r\,\d W_{r},
\end{split}
\end{align}
where 
$ \p \tilde{b},(\p \tilde{\sigma}^{(k)})_{k=1}^d$ are the uniformly bounded  processes  in \eqref{eq:X_M_derivative},
and $\p\tilde{g}_T$ (resp.~$\p \tilde{f}$) is the bounded  $\cF_T$-measurable random variable (resp.~uniformly bounded process)
in \eqref{eq:YZ_M_derivative}.

The next proposition represents the process $Z$ by using 
$\p Y$ and the inverse of $\p X$, which 
 extends 
\cite[Equation (2.13)]{zhang2004} and 
\cite[Equation (3.15)]{lionnet2015} to the present setting 
where the coefficients are non-differentiable in the spatial variables and merely measurable in the time variable.

We emphasize that 
unlike
for the classical FBSDEs,
the processes $(\p X,\p Y,\p Z)$ 
\textit{do not} agree with 
the first variation processes
of solutions $(X,Y,Z)$ to
the MV-FBSDE
 \eqref{eq:mv-fbsde}
(i.e., the derivatives of the solutions  with respect to the initial condition $\xi_0$),
 since the latter ones
also involve the derivatives of 
 marginal distributions of the solutions
with respect to  the initial condition $\xi_0$.

\begin{Proposition}\l{prop:pXYZ}
Assume the setting in Theorem \ref{thm:mv-fbsde_regularity}.
Then we have that:
\begin{enumerate}[(1)]
\item\l{item:pXYZ}
%The FBSDE
 \eqref{eq:variation} admits a unique solution 
 $(\p X,\p Y,\p Z)\in \cS^2(\sR^{n\t n})\t \cS^2(\sR^{m\t n})\t \cH^2(\sR^{(m\t d)\t n})$
 satisfying for all $p\ge 2$ that
 $\|\p X\|_{\cS^p}+\|\p Y\|_{\cS^p}+\|\p Z\|_{\cH^p}
\le C_{(p)}<\infty$.
\item\l{item:pX_inv}
For all $t\in [0,T]$,
 $\p X_t$ is invertible. 
Moreover, 
for the inverse 
 $(\p X^{-1}_t)_{t\in [0,T]}$ and
for any $p\ge 2$, 
it holds for some constant $C_{(p)} > 0$  that
$\|\p X^{-1}\|_{\cS^p}
\le C_{(p)}$
and
$\|\p X^{-1}_t-\p X^{-1}_s\|_{L^p}\le C_{(p)}|t-s|^{\frac{1}{2}}$
for all $t,s\in [0,T]$.

\item\l{item:Z_representation} \l{item:Z_regularity}
There exists a uniformly bounded process $\p v\in \cS^\infty(\sR^{m\t n})$ 
such that 
it holds   for $\d \sP\otimes \d t$ a.e.~that $Z_t=\p Y_t \, \p X^{-1}_t \, \tilde{\sigma}(t,X_t)=\p v(t) \tilde{\sigma}(t,X_t)$.
\end{enumerate}

\end{Proposition}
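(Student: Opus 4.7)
The plan is to prove the three items in sequence, with Item (\ref{item:Z_representation}) being the main source of difficulty because the decoupling field $v$ is only Lipschitz in $x$, so one cannot directly differentiate it.

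For Item (\ref{item:pXYZ}), I would invoke standard $L^p$-theory for linear SDEs and BSDEs with bounded coefficients. The forward equation in \eqref{eq:variation} is linear in $\p X$ with drift $\p\tilde{b}$ and diffusion $\p\tilde{\sigma}^{(k)}$ uniformly bounded by Proposition \ref{prop:X_M_derivative}, so a Gronwall argument combined with BDG yields $\|\p X\|_{\cS^p}\le C_{(p)}$ for every $p\ge 2$. The backward equation is then a linear BSDE with bounded driver coefficients and terminal data $\p\tilde{g}_T\p X_T\in L^p$ (using that $\p\tilde{g}_T$ is bounded by Proposition \ref{prop:YZ_M_derivative}), so standard $L^p$ estimates for linear BSDEs deliver the $\cS^p$- and $\cH^p$-bounds on $\p Y$ and $\p Z$.

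For Item (\ref{item:pX_inv}), I would define $\Psi$ as the $\sR^{n\t n}$-valued solution of the dual matrix SDE
\begin{equation*}
\d \Psi_t= \Psi_t\Big(-\p\tilde{b}_t+\textstyle\sum_{k=1}^{d}(\p\tilde{\sigma}^{(k)}_t)^{2}\Big)\d t- \Psi_t\textstyle\sum_{k=1}^{d}\p\tilde{\sigma}^{(k)}_t\,\d W^{(k)}_t,\qquad \Psi_0=\sI_n.
\end{equation*}
An application of It\^o's product rule shows $\d(\p X_t\Psi_t)=0$, and hence $\p X_t\Psi_t=\sI_n$ for all $t\in[0,T]$, identifying $\p X_t^{-1}=\Psi_t$. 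Since the coefficients of the SDE for $\Psi$ are again uniformly bounded, the same argument as in Item (\ref{item:pXYZ}) gives $\|\p X^{-1}\|_{\cS^p}\le C_{(p)}$. The $1/2$-H\"older regularity in $L^p$ then follows by writing $\p X_t^{-1}-\p X_s^{-1}$ as a Lebesgue integral plus a stochastic integral using the SDE for $\Psi$ and applying H\"older's and BDG inequalities with the moment bounds just obtained.

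For Item (\ref{item:Z_representation}), I would first derive the Malliavin representation. Comparing \eqref{eq:X_M_derivative} with the forward part of \eqref{eq:variation} shows that both are linear SDEs driven by the same bounded coefficients, and $D_sX_\cdot$ starts at time $s$ with value $\tilde{\sigma}(s,X_s)$, so by uniqueness $D_sX_t=\p X_t\p X_s^{-1}\tilde{\sigma}(s,X_s)$ for $0\le s\le t\le T$. Next, for each $j\in\{1,\ldots,d\}$, a direct substitution combined with the identity just derived shows that the pair $\big(\p Y_t \p X_s^{-1}\tilde{\sigma}^{(j)}(s,X_s),\,\p Z_t\p X_s^{-1}\tilde{\sigma}^{(j)}(s,X_s)\big)_{t\in[s,T]}$ (with $\tilde\sigma^{(j)}$ the $j$-th column) solves on $[s,T]$ the same linear BSDE \eqref{eq:YZ_M_derivative} as $(D_sY^{(j)},D_sZ^{(j)})$; uniqueness of linear BSDEs then yields $D_sY_t=\p Y_t\p X_s^{-1}\tilde{\sigma}(s,X_s)$. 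Setting $s=t$ and using $D_tY_t=Z_t$ from Proposition \ref{prop:YZ_M_derivative} gives the first equality $Z_t=\p Y_t\p X_t^{-1}\tilde{\sigma}(t,X_t)$.

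The hard part is the second equality together with the uniform boundedness of $\p v$, where only the Lipschitz regularity of $v$ is available. I would exploit the identity $Y_t=v(t,X_t)$ with $v(t,\cdot)$ being $L_v$-Lipschitz and apply the Malliavin chain rule for Lipschitz functions (see Proposition 1.2.4 in \cite{nualart2006}, obtained by mollifying $v(t,\cdot)$ to get $C^{1}$ approximations with the same Lipschitz constant $L_v$ and taking a weak limit in $L^{2}$ of the resulting gradients, as in the proof of Proposition \ref{prop:YZ_M_derivative}). This produces, for every $t\in[0,T]$, an $\cF_t$-measurable random matrix $G_t\in\sR^{m\t n}$ with $|G_t|\le L_v$ a.s.\ such that $D_sY_t=G_tD_sX_t$ for $0\le s\le t$. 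Setting $s=t$ and using $D_tX_t=\tilde{\sigma}(t,X_t)$ yields $Z_t=G_t\tilde{\sigma}(t,X_t)$. Defining $\p v(t):=G_t$ and verifying joint measurability in $(t,\omega)$ via the measurable selection in the mollification argument gives a process $\p v\in\cS^\infty(\sR^{m\t n})$ with $\|\p v\|_{\cS^\infty}\le L_v$, establishing the remaining equality $Z_t=\p v(t)\tilde{\sigma}(t,X_t)$.
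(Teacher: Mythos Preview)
Your proposal is correct and follows essentially the same route as the paper: standard linear $L^p$-estimates for Item (\ref{item:pXYZ}); the explicit dual matrix SDE and It\^o's product rule for Item (\ref{item:pX_inv}) (the paper's process $M$ is your $\Psi$); and for Item (\ref{item:Z_representation}), comparison of \eqref{eq:X_M_derivative}--\eqref{eq:YZ_M_derivative} with \eqref{eq:variation} via uniqueness of linear FBSDEs, together with the Lipschitz chain rule obtained by mollifying $v(t,\cdot)$ and passing to a weak limit. The one place where the paper is slightly cleaner is the construction of $\p v$: rather than building $G_t$ for each fixed $t$ and then arguing joint measurability separately, the paper takes the weak limit of the mollified gradients $(\nabla_x[v\ast\rho^\eps](\cdot,X_\cdot))_{\eps>0}$ directly in the process space $\cH^2(\sR^{m\t n})$, so that progressive measurability of $\p v$ comes for free.
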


\begin{proof}
%By using the boundedness and adaptedness of the coefficients, we can deduce 
%from Lemma \ref{lemma:fbsde_moment}
Due to the boundedness and adaptedness of the coefficients,
it is clear that 
 \eqref{eq:variation} is well-posed
 and admits
the moment bounds %of the solution
 in Item (\ref{item:pXYZ}).
We then show Item (\ref{item:pX_inv}) by first introducing  the process 
$M$ as  the solution to 
the following linear SDE:
% \begin{align*}%\l{eq:X_inv}
%\begin{split}
%\p X_t^{-1}
%&=
% \sI_n-
%\int_0^t \p X_s^{-1}\bigg[\p \tilde{b}_s -\sum_{k=1}^d \p \tilde{\sigma}^{(k)}_s \p \tilde{\sigma}^{(k)}_s\bigg]\, \d s 
%%\\
%%&\quad 
%-\sum_{k=1}^d
%\int_0^t \p X^{-1}_s \p \tilde{\sigma}^{(k)}_s\, \d W^{(k)}_{s},
%\end{split}
%\end{align*}
 \begin{align*}%\l{eq:X_inv}
\begin{split}
 M_t
&=
 \sI_n-
\int_0^t  M_s\bigg[\p \tilde{b}_s -\sum_{k=1}^d \p \tilde{\sigma}^{(k)}_s \p \tilde{\sigma}^{(k)}_s\bigg]\, \d s 
%\\
%&\quad 
-\sum_{k=1}^d
\int_0^t M_s \p \tilde{\sigma}^{(k)}_s\, \d W^{(k)}_{s}.
\end{split}
\end{align*}
%which is uniquely defined due to Lemma \ref{lemma:fbsde_moment}.
Then  It\^{o}'s formula  shows that $M_t\p X_t=\p X_t M_t= \sI_n$ for all $t\in [0,T]$,
which implies 
for all $t\in [0,T]$
that 
 $\p X_t$ is invertible with the inverse $\p X_t^{-1}=M_t$
 (see  
\cite[p.~126]{nualart2006} for details).
Standard estimates for linear SDEs then lead to 
the desired \textit{a priori} estimates of $\p X^{-1}$. 
%then
%follow from Lemma \ref{lemma:fbsde_moment}.

Finally, by comparing \eqref{eq:variation} with \eqref{eq:X_M_derivative} and \eqref{eq:YZ_M_derivative}, we can deduce 
from    the uniqueness of solutions to linear FBSDEs that 
$D_s X_t=\p X_t \p X^{-1}_s \tilde{\sigma}(s,X_s)$ 
and $D_sY_t=\p Y_t \, \p X^{-1}_s\, \tilde{\sigma}(s,X_s)$
for all $0\le s\le t\le T$. 
Hence, since it holds for $\d \sP\otimes \d t$ a.e.~that $Z_t=D_t Y_t$,
we can  obtain the first identity in  Item (\ref{item:Z_representation})  by setting $s=t$
in  $D_sY_t=\p Y_t \, \p X^{-1}_s\, \tilde{\sigma}(s,X_s)$.
On the other hand, 
by using the Lipschitz decoupling field $v$ of the process $Y$
and the fact that 
 $X_t\in \sD^{1,2}(\sR^n)$ and $Y_t\in \sD^{1,2}(\sR^{m})$ for all $t\in [0,T]$,
we can deduce from  the chain rule that
$D_sY_t=\p v(t) D_sX_t$. In particular, the process $\p v$ can be obtained as a weak limit of 
$(\nabla_x[v\ast \rho^\eps](\cdot,X_\cdot))_{\eps>0}$ in $\cH^2(\sR^{m\t n})$
with  standard mollifiers   $( \rho^\eps)_{\eps>0}\subset C^\infty(\sR^n)$,
which implies that  $\p v\in \cS^\infty(\sR^{m\t n})$ is uniformly bounded by  some constant $C$.
Therefore, by setting $s=t$ and using the identity that 
$D_s X_t=\p X_t \p X^{-1}_s \tilde{\sigma}(s,X_s)$, we have that $Z_t=D_tY_t=\p v(t) D_tX_t=\p v(t)  \tilde{\sigma}(t,X_t)$,
which completes the proof of the second identity in   Item (\ref{item:Z_representation}).
\end{proof}

With {Proposition} \ref{prop:pXYZ} in hand, we are now ready to prove 
Theorem \ref{thm:mv-fbsde_regularity}.

\begin{proof}[Proof of Theorem \ref{thm:mv-fbsde_regularity}]
We adapt the arguments for 
 \cite[Theorem  3.5]{lionnet2015}
to 
\eqref{eq:fbsde}
 with irregular coefficients,
 and present the main steps for the reader's convenience.
 
Lemma \ref{lemma:lipschitz_coefficients} and standard moment estimates of FBSDE \eqref{eq:fbsde}
(see e.g.\  \cite[Theorems  3.4.3
and 4.4.4]{zhang2017})
give us that 
$\|X\|_{\cS^p}+\|Y\|_{\cS^p}\le  C_{(p)}(1+\|\xi_0\|_{L^{p}})$.
 Moreover, 
Proposition  \ref{prop:pXYZ}, Item (\ref{item:Z_representation})
shows
 for $\d \sP\otimes \d t$ a.e.~that 
 $|Z_t|\le C  | \tilde{\sigma}(t,X_t)|
 =C| {\sigma}(t,X_t,Y_t,\sP_{(X_t,Y_t,Z_t)})|$,
 which together with 
 the assumption of $\sigma$ in 
  (H.\ref{assum:mv-fbsde_lip}) shows that
$\|Z\|_{\cS^p}\le  C_{(p)}(1+\|\xi_0\|_{L^{p}})$.

The H\"{o}lder regularity of the processes $X$ and $Y$
follows directly from 
the fact that $(X,Y,Z)$ solves \eqref{eq:mv-fbsde}
(or equivalently \eqref{eq:fbsde}),
together with
H\"{o}lder's inequality, 
the estimate of $\|(X,Y,Z)\|_{\cS^p}$
and the Burkholder-Davis-Gundy inequality (see e.g.~\cite[Theorem 3.5 (ii)]{lionnet2015}).

Finally, we establish  Item (\ref{item:mv_Z_Holder}) by using the additional assumptions that 
$\sigma$ depends only on the flow $(\sP_{(X_t,Y_t)})_{t\in [0,T]}$
 and satisfies \eqref{eq:sigma_local_holder}.
 Then, 
for any $p\ge 2$ and $s,t\in [0,T]$,
we can obtain from 
  $\cW_2(\sP_{U},\sP_{V})\le \|U-V\|_{L^p}$ 
that 
\begin{align}\l{eq:sigma_tilde_holder}
\begin{split}
&\|\tilde{\sigma}(s,X_{s})-\tilde{\sigma}(t,X_{t})\|_{L^{p}}
\\
&\le \|{\sigma}(s,X_{s},Y_s,\sP_{(X_s,Y_s)})-{\sigma}(t,X_{s},Y_s,\sP_{(X_s,Y_s)})\|_{L^{p}}
\\
&\quad +\|{\sigma}(t,X_{s},Y_s,\sP_{(X_s,Y_s)})-{\sigma}(t,X_{t},Y_t,\sP_{(X_t,Y_t)})\|_{L^{p}}
\\
&\le 
C\{
(1+\|X_s\|_{L^p}+\|Y_s\|_{L^p})
 |s-t|^{1/2}+\|X_{s}-X_{t}\|_{L^p}+\|Y_{s}-Y_{t}\|_{L^p}\},
 \end{split}
 \end{align}
 which along with the estimates in Items (\ref{item:mv_Z_bdd})-(\ref{item:mv_XY_Holder}) gives us that
$\|\tilde{\sigma}(s,X_{s})-\tilde{\sigma}(t,X_{t})\|_{L^{p}}
\le 
C_{(p)}(1+\|\xi_0\|_{L^{p}})|s-t|^{1/2}$
for all $s,t\in [0,T]$.

Now let
 $p\ge 2$ and $\eps>0$
be arbitrary given constants. Let
$p_\eps\coloneqq \frac{p+\eps}{p}>1$ and
$q_\eps\coloneqq \frac{2(p+\eps)}{\eps}>1$,
we have $1/p_\eps+1/q_\eps+1/q_\eps=1$,
which together with H\"{o}lder's inequality shows that 
$\|\zeta \phi \psi\|_{L^1}\le \|\zeta\|_{L^{p_\eps}}\|\phi\|_{L^{q_\eps}}\|\psi\|_{L^{q_\eps}}$.
Let $\pi=\{0=t_0<\cdots<t_N=T\}$ be a partition
with  stepsize $|\pi|=\max_{i=0,\ldots, N-1}(t_{i+1}-t_i)$.
For 
any   given $i\in \{0,\ldots,N-1\}$ and 
 $r\in (t_i,t_{i+1})$,
we can deduce from {Proposition} \ref{prop:pXYZ} Item (\ref{item:Z_regularity}) that   $Z_r-Z_{t_i}=I_{1,r}+I_{2,r}+I_{3,r}$,
where
$I_{1,r}\coloneqq 
\p Y_{r}\big(\p X^{-1}_{r}-\p  X^{-1}_{t_i}\big)\tilde{\sigma}(r,X_{r})$,
$I_{2,r}\coloneqq \p  Y_{r}\,\p  X^{-1}_{t_i}\big[\tilde{\sigma}(r,X_{r})- \tilde{\sigma}(t_i,X_{t_i})\big]$
and
 $I_{3,r}\coloneqq (\p  Y_{r}-\p  Y_{t_i}) \p  X^{-1}_{t_i} \tilde{\sigma}(t_i,X_{t_i})$.
By using
H\"{o}lder's inequality,
Proposition  \ref{prop:pXYZ} Items  \eqref{item:pXYZ}-\eqref{item:pX_inv}
and
 \eqref{eq:sigma_tilde_holder}, we can deduce that
\begin{align}
%\|I_{1,r}\|_{L^p}
%&\le
%\|\p Y\|_{\cS^{pq_\eps}}\|\p X^{-1}_{r}-\p  X^{-1}_{t_i}\|_{L^{pq_\eps}} \|\tilde{\sigma}(\cdot,X)\|_{\cS^{p+\eps}}
%\le C_{(p,\eps)}|\pi|^{\frac{1}{2}}\|\tilde{\sigma}(\cdot,X)\|_{\cS^{p+\eps}},
%\nb\\
\|I_{2,r}\|_{L^p}
&\le
\|\p Y_{r}\|_{L^{pq_\eps}}\|\p  X^{-1}_{t_i}\|_{L^{pq_\eps}} \|\tilde{\sigma}(r,X_{r})-\tilde{\sigma}(t_i,X_{t_i})\|_{L^{p+\eps}}
\le C_{(p,\eps)}(1+\|\xi_0\|_{L^{p+\eps}})|\pi|^{\frac{1}{2}}.
\l{eq:I_2}
\end{align}
Then we can proceed along the lines of the proof
of \cite[Theorem 3.5 (iii)]{lionnet2015}
to estimate 
$\|I_{1,r}\|^p_{L^p}$ and 
$\|I_{3,r}\|^p_{L^p}$,
and then establish 
the desired estimate of 
$\sum_{i=0}^{N-1}
\sE[(\int_{t_i}^{t_{i+1}}
|Z_r-Z_{t_{i}}|^2\,\d r)^{p/2}
]\le C_{(p,\eps)}(1+\|\xi_0\|_{L^{p+\eps}})|\pi|^{\frac{1}{2}}$.
The term 
$\sum_{i=0}^{N-1}
\sE[(\int_{t_i}^{t_{i+1}}
|Z_r-Z_{t_{i+1}}|^2\,\d r)^{p/2}
]$ can be estimated by using similar arguments,
which completes the proof of the  estimates in Item (\ref{item:Z_regularity}). 
\end{proof}

We end this section by emphasizing that 
the generalized representation formulas of the process $Z$ in Proposition  \ref{prop:pXYZ}, Item (\ref{item:Z_representation})
are  crucial  
for establishing the regularity estimate 
in Theorem \ref{thm:mv-fbsde_regularity},
especially  the H\"{o}lder regularity of the process $Z$ in Item \eqref{item:Z_regularity}.

Recall that 
  \cite{zhang2004,lionnet2015} 
   established similar regularity results 
for decoupled FBSDE \eqref{eq:fbsde} whose coefficients are $1/2$-H\"{o}lder continuous in $t$, 
uniformly with respect to the spatial variables,
based on a representation  
of the process $Z$ that only holds  when all coefficients
of the FBSDE
are \textit{continuously differentiable} in the spatial variables.
In particular, 
the authors  first 
employ a  mollification argument
to
 construct a sequence of FBSDEs with smooth coefficients $(\tilde{b}^\eps,\tilde{\sigma}^\eps,\tilde{f}^\eps,\tilde{g}^\eps)_{\eps>0}$,
and then 
establish 
a uniform (with respect to $\eps$)
 regularity estimate  for the corresponding solutions $(X^\eps,Y^\eps,Z^\eps)_{\eps>0}$,
 which converge to $(X,Y,Z)$ in $\cS^p\t \cS^p\t \cH^p$ for all $p\ge 2 $ as $\eps\to 0$.
The essential step of the above procedure is to ensure for all $p\ge 2$ that 
$\|\tilde{\sigma}^\eps(t,X^\eps_{t})-\tilde{\sigma}^\eps(s,X^\eps_{s})\|_{L^{p}}
\le K_{p}|t-s|^{\frac{1}{2}}$ for a constant $K_p$ uniformly with respect to $(\eps,t,s)$, 
%(see \eqref{eq:I_2}),
which holds due to their assumption that $t\mapsto \tilde{\sigma}(t,x)$
is $1/2$-H\"{o}lder continuous
 for all $x\in \sR^n$.

However, this mollification argument fails for the fully-coupled MV-FBSDE \eqref{eq:mv-fbsde}, whose diffusion coefficient is of the form $\sigma(t,X_t,Y_t,\sP_{(X_t,Y_t)})$.
%As shown in {Proposition} \ref{eq:decoupling_exist},
%under suitable assumptions, one can represent the process $Y$ as 
%$Y_t=v(t,X_t)$ with some  measurable decoupling field $v:[0,T]\t \sR^n\to\sR^m$,
By using the decoupling field $v$ of the process $Y$,
we can rewrite \eqref{eq:mv-fbsde}
as \eqref{eq:fbsde} with 
 a modified diffusion coefficient  
 $\tilde{\sigma}:[0,T]\t \sR^n\to\sR^{n\t d}$,
 which satisfies for all $(t,x)\in [0,T]\t \sR^n$ that 
$\tilde{\sigma}(t,x)=\sigma(t,x,v(t,x),\sP_{(X_t,Y_t)})$
and is  merely measurable  in the time variable.
%Since
%a mollification of the function $\tilde{\sigma}$
%would usually destroy the role of the decoupling field, i.e., $Y^\eps_t\not =v^\eps(t,X^\eps_t)$,
Hence it is unclear how to mollify the coefficients such that 
$[0,T]\ni t\mapsto \tilde{\sigma}^\eps(t,X^\eps_t)\in L^p(\Om)$
is $1/2$-H\"{o}lder continuous uniformly with respect to $\eps$.

We overcome this difficulty by  extending the representation of the process $Z$ 
to FBSDEs with irregular coefficients that are non-differentiable in the state variables and merely measurable in 
the time variable; see Proposition  \ref{prop:pXYZ}, Item (\ref{item:Z_representation}).
Then, we can establish the regularity of the solution to MV-FBSDEs by directly 
studying the decoupled FBSDE without mollifying the coefficients,
where 
the desired H\"{o}lder continuity of 
$[0,T]\ni t\mapsto \tilde{\sigma}(t,X_t)=\sigma(t,X_t,Y_t,\sP_{(X_t,Y_t)})\in L^p(\Om)$
is inherited from the regularity of the original coefficient $\sigma$
and the processes $(X,Y)$.

\ms
\textbf{Acknowledgements:} W.\ Stockinger is supported by an Upper Austrian Government grant.

\newpage
\appendix

\section{Proof of Corollary \ref{cor:mono}}\l{sec:proof_mono}

In this section, we prove Corollary \ref{cor:mono}
by adapting 
 the method of
continuation in \cite{peng1999,bensoussan2015}
 to the present setting.

We first present  a stability result for the following family of MV-FBSDEs:
for $t\in [0,T]$,
\begin{align}\l{eq:moc}
\begin{split}
\d X_t&=(\lambda b(t,X_t,Y_t,\sP_{(X_t,Y_t,Z_t)})+\cI^b_t)\,\d t +
(\lambda\sigma (t,X_t,Y_t,  \sP_{(X_t,Y_t,Z_t)})+\cI^\sigma_t)\, \d W_t, 
\\
\d Y_t&=-(\lambda f(t,X_t,Y_t,Z_t, \sP_{(X_t,Y_t,Z_t)})+\cI^f_t)\,\d t+Z_t\,\d W_t,
\\
X_0&=\xi,\q Y_T=\lambda g(X_T,\sP_{X_T})+\cI^g_T,
\end{split}
\end{align}
where  $\lambda\in [0,1]$, 
$\xi\in L^2(\cF_0;\sR^n)$,
$(\cI^b,\cI^\sigma,\cI^f)\in \cH^2(\sR^n\t\sR^{n\t d}\t \sR^m)$ and $\cI^g_T\in L^2(\cF_T;\sR^m)$
are given.

\begin{Lemma}\l{lemma:mono_stab}
Suppose
 the functions $(b,\sigma,f,g)$ satisfy the assumptions in 
 Corollary \ref{cor:mono}.
 Then there exists a constant $C>0$ such that,  
 for all $\lambda_0\in [0,1]$,
for every   
${\Theta}\coloneqq(X,Y, Z)\in  \cS^2(\sR^n) \t \cS^2(\sR^m) \t \cH^2(\sR^{m\t d})$
satisfying \eqref{eq:moc}
with 
$\lambda=\lambda_0$,
  functions $(b,\sigma,f,g)$
  and some
$(\cI^b,\cI^\sigma,\cI^f)\in \cH^2(\sR^n\t\sR^{n\t d}\t \sR^m)$,
 $\cI^g_T\in L^2(\cF_T;\sR^m)$,
 $\xi\in L^2(\cF_0;\sR^n)$,
 and for every 
$ \bar{\Theta}\coloneqq(\bar{X},\bar{Y}, \bar{Z})\in  \cS^2(\sR^n) \t \cS^2(\sR^m) \t \cH^2(\sR^{m\t d})$
satisfying  \eqref{eq:moc}
with 
$\lambda=\lambda_0$,
another 4-tuple of functions $(\bar{b},\bar{\sigma},\bar{f},\bar{g})$ 
satisfying merely
 (H.\ref{assum:mv-fbsde_lip}),
 and some 
$(\bar{\cI}^b,\bar{\cI}^\sigma,\bar{\cI}^f)\in \cH^2(\sR^n\t\sR^{n\t d}\t \sR^m)$,
 $\bar{\cI}^g_T\in L^2(\cF_T;\sR^m)$,
 $\bar{\xi}\in L^2(\cF_0;\sR^n)$, we have that 
 \begin{align}\l{eq:mono_stab}
 \begin{split}
 &\|X-\bar{X}\|_{\cS^2}^2+ \|Y-\bar{Y}\|_{\cS^2}^2+ \|Z-\bar{Z}\|_{\cH^2}^2
 \\
& \le C\bigg\{\|\xi-\bar{\xi}\|_{L^2}^2+
\|\lambda_0 (g(\bar{X}_T,\sP_{\bar{X}_T})-\bar{g}(\bar{X}_T,\sP_{\bar{X}_T}))+\cI^g_T-\bar{\cI}^g_T\|_{L^2}^2
\\
&\quad
+\|\lambda_0 (b(\cdot,\bar{X}_\cdot,\bar{Y}_\cdot,\sP_{\bar{\Theta}_\cdot})-\bar{b}(\cdot,\bar{X}_\cdot,\bar{Y}_\cdot,\sP_{\bar{\Theta}_\cdot}))
+\cI^b-\bar{\cI}^b\|_{\cH^2}^2
\\
&\quad
+\|\lambda_0 (\sigma(\cdot,\bar{X}_\cdot,\bar{Y}_\cdot,\sP_{\bar{\Theta}_\cdot})-\bar{\sigma}(\cdot,\bar{X}_\cdot,\bar{Y}_\cdot,\sP_{\bar{\Theta}_\cdot}))
+\cI^\sigma-\bar{\cI}^\sigma\|_{\cH^2}^2
\\
&\quad
+\|\lambda_0 (f(\cdot,\bar{\Theta}_\cdot,\sP_{\bar{\Theta}_\cdot})-\bar{f}(\cdot,\bar{\Theta}_\cdot,\sP_{\bar{\Theta}_\cdot}))
+\cI^f-\bar{\cI}^f\|_{\cH^2}^2
\bigg\}.
\end{split}
  \end{align}

\end{Lemma}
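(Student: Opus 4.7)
The plan is to adapt the method of continuation of \cite{peng1999} and its MV extension in \cite{bensoussan2015} to the generalised monotonicity framework of Corollary \ref{cor:mono}. Setting $\Delta X := X - \bar X$, $\Delta Y := Y - \bar Y$, $\Delta Z := Z - \bar Z$ and introducing the coefficient defects $\hat b_t := \lambda_0 [b(t,\bar X_t,\bar Y_t, \sP_{\bar \Theta_t}) - \bar b(t,\bar X_t,\bar Y_t, \sP_{\bar \Theta_t})] + \cI^b_t - \bar \cI^b_t$ (and analogous $\hat \sigma_t$, $\hat f_t$, $\hat g_T$), the triple $\Delta \Theta$ satisfies a coupled MV-FBSDE whose drivers are $\lambda_0[b(t,\Theta_t,\sP_{\Theta_t}) - b(t,\bar\Theta_t,\sP_{\bar\Theta_t})] + \hat b_t$ (and analogues for $\sigma, f$) and whose terminal condition is $\Delta Y_T = \lambda_0[g(X_T,\sP_{X_T})-g(\bar X_T,\sP_{\bar X_T})] + \hat g_T$. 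Writing $\cX$ for the sum of $\|\Delta X_0\|_{L^2}^2$, $\|\hat g_T\|_{L^2}^2$ and the $\cH^2$-norms of $\hat b$, $\hat \sigma$, $\hat f$ --- which, by the triangle inequality, is dominated by the right-hand side of \eqref{eq:mono_stab} --- the goal is to show $\|\Delta\Theta\|^2 := \|\Delta X\|_{\cS^2}^2 + \|\Delta Y\|_{\cS^2}^2 + \|\Delta Z\|_{\cH^2}^2 \le C\cX$ with $C$ independent of $\lambda_0 \in [0,1]$.

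The core step is to apply It\^o's formula to $\la G\Delta X_t, \Delta Y_t\ra$ on $[0,T]$, take expectation, and invoke the postulated monotonicity with $(\Theta_1,\Theta_2) = (\Theta_t, \bar\Theta_t)$ pointwise in $t$, together with the terminal monotonicity of $g$ to lower-bound $\sE[\la G\Delta X_T, \Delta Y_T\ra]$. Absorbing the boundary term $\sE[\la G\Delta X_T, \hat g_T\ra]$ and the defect-source integrals via Young's inequality with a tunable $\delta > 0$ then yields
\begin{equation*}
\lambda_0 \alpha_1 \sE[\phi_1(X_T,\bar X_T)] + \lambda_0 \beta_1 \int_0^T \sE[\phi_1]\,\d t + \lambda_0 \beta_2 \int_0^T \sE[\phi_2]\,\d t \le \delta \|\Delta\Theta\|^2 + C_\delta \cX. \qquad (\star)
\end{equation*}
Complementing $(\star)$, standard stability estimates for the forward SDE (It\^o on $|\Delta X|^2$, BDG, Gronwall) and the backward BSDE (It\^o on $|\Delta Y|^2$, Gronwall) applied to the linearised system for $\Delta\Theta$ give, under Case (1), after using \eqref{eq:b_sigma_lip} to bound $\|\lambda_0 [(b,\sigma)(\Theta_t)-(b,\sigma)(\bar\Theta_t)]\|_{L^2}^2 \le \lambda_0^2 L_\phi(\|\Delta X_t\|_{L^2}^2 + \sE[\phi_2])$ and the Lipschitz stability of the BSDE, the estimate $\|\Delta\Theta\|^2 \le C\big(\cX + \lambda_0^2 \int_0^T \sE[\phi_2]\,\d t\big)$. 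Under Case (2), splitting $f(\Theta_t) - f(\bar\Theta_t)$ into an $X$-part (estimated by \eqref{eq:f_g_lip} in terms of $\phi_1$) and a $(Y,Z)$-part (handled by Lipschitz continuity in $(Y,Z)$ and Gronwall), with an analogous split for $g$ at the terminal, symmetrically produces $\|\Delta\Theta\|^2 \le C\big(\cX + \lambda_0^2 [\sE\phi_1(X_T,\bar X_T) + \int_0^T \sE[\phi_1]\,\d t]\big)$.

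The hardest part is to close these inequalities \emph{uniformly} in $\lambda_0 \in [0,1]$: naively dividing $(\star)$ by $\lambda_0 \beta_2$ would introduce a $\lambda_0^{-1}$ singularity at $\lambda_0 = 0$. The decisive observation is that the forward-backward bounds carry an \emph{extra} factor $\lambda_0$ in front of the monotonicity quantity, so for $\lambda_0 \in (0,1]$ one has $\lambda_0^2 \int_0^T \sE[\phi_2]\,\d t = \lambda_0 \cdot \bigl(\lambda_0 \int_0^T \sE[\phi_2]\,\d t\bigr) \le \lambda_0 \beta_2^{-1}(\delta\|\Delta\Theta\|^2 + C_\delta\cX) \le \beta_2^{-1}(\delta\|\Delta\Theta\|^2 + C_\delta\cX)$, while the case $\lambda_0 = 0$ is trivial as the left-hand side vanishes. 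Substituting this into the Case (1) bound and choosing $\delta$ small enough that $C\delta/\beta_2 < 1/2$ allows $\|\Delta\Theta\|^2$ to be absorbed to the left-hand side, giving $\|\Delta\Theta\|^2 \le C'\cX$ with $C'$ depending only on the structural constants $L, L_\phi, \beta_2, |G|$ and $T$. Case (2) follows identically using the first two terms of $(\star)$ in place of the third and $\min(\alpha_1, \beta_1)$ in place of $\beta_2$.
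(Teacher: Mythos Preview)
Your proposal is correct and follows essentially the same approach as the paper: apply It\^o's formula to $\langle G\Delta X_t,\Delta Y_t\rangle$ to obtain the key monotonicity inequality $(\star)$, combine it with standard forward SDE and backward BSDE stability estimates (using \eqref{eq:b_sigma_lip} in Case~(1) and \eqref{eq:f_g_lip} in Case~(2)), and close by absorbing the $\|\Delta\Theta\|^2$ term for small $\delta$. The only cosmetic difference is the order of substitution---the paper inserts $(\star)$ into the forward (resp.\ backward) estimate first and then couples with the other direction, whereas you first combine both directions into $\|\Delta\Theta\|^2\le C(\cX+\lambda_0^2\int\phi_j)$ and then invoke $(\star)$; your explicit tracking of the extra $\lambda_0$ factor is equivalent to the paper's implicit use of $\lambda_0^2\le\lambda_0$ on $[0,1]$.
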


\begin{proof}[Proof of Lemma \ref{lemma:mono_stab}]
Throughout this proof, let 
 $G\in \sR^{m\t n}$ be the matrix in Corollary \ref{cor:mono},
let
$\delta \xi=\xi-\bar{\xi}$,
$\delta \cI^g_T=\cI^g_T-\bar{\cI}^g_T$,
  $g(X_T)=g(X_T,\sP_{X_T})$,
$g(\bar{X}_T)=g(\bar{X}_T,\sP_{\bar{X}_T})$ 
and $\bar{g}(\bar{X}_T)=\bar{g}(\bar{X}_T,\sP_{\bar{X}_T})$,
for each $t\in [0,T]$
 let 
 $\delta\cI^b_t=\cI^b_t-\bar{\cI}^b_t$,
  $\delta\cI^\sigma_t=\cI^\sigma_t-\bar{\cI}^\sigma_t$,
   $\delta\cI^f_t=\cI^f_t-\bar{\cI}^f_t$,
  $b(\Theta_t)=b(t,X_t,Y_t,\sP_{\Theta_t})$,
$b(\bar{\Theta}_t)=b(t,\bar{X}_t,\bar{Y}_t,\sP_{\bar{\Theta}_t})$
and $\bar{b}(\bar{\Theta}_t)=\bar{b}(t,\bar{X}_t,\bar{Y}_t,\sP_{\bar{\Theta}_t})$.
Similarly, we introduce the notation
$\ell(\Theta_t), \ell(\bar{\Theta}_t), \bar{\ell}(\bar{\Theta}_t)$ for $\ell=\sigma, f$ and $t\in [0,T]$.
We also denote by $C$ a generic constant, 
which depends only on the dimensions,
the constant $L$ in (H.\ref{assum:mv-fbsde_lip})  and
the constants $G,\a_1,\b_1,\b_2,L_\phi$ in  Corollary \ref{cor:mono}, and may take a different value at each occurrence.

By applying It\^{o}'s formula to $\la Y_t-\bar{Y}_t, G(X_t-\bar{X}_t)\ra$,  we obtain that 
\begin{align*}
&\sE[\la \lambda_0(g(X_T)-\bar{g}(\bar{X}_T))+\delta I^g_T, G(X_T-\bar{X}_T)\ra]
-\sE[\la Y_0-\bar{Y}_0, G\delta \xi\ra]
\\
&=\sE\bigg[\int_0^T
\la \lambda_0(b(\Theta_t)-\bar{b}(\bar{\Theta}_t))+\delta \cI^b_t, G^*( Y_t-\bar{Y}_t)\ra 
+
\la \lambda_0(\sigma(\Theta_t)-\bar{\sigma}(\bar{\Theta}_t))+\delta \cI^\sigma_t, G^*( Z_t-\bar{Z}_t)\ra
\\
&\quad 
+
\la -\big(\lambda_0(f(\Theta_t)-\bar{f}(\bar{\Theta}_t))+\delta \cI^f_t\big), G( X_t-\bar{X}_t)\ra
\, \d t
\bigg].
\end{align*}
Then, 
by adding and subtracting the terms
$g(\bar{X}_T), b(\bar{\Theta}_t),\sigma(\bar{\Theta}_t),f(\bar{\Theta}_t)$
and applying the monotonicity condition, we can deduce that
\begin{align*}
&
\lambda_0 \a_1\phi_1(X_T,\bar{X}_T)+
\sE[\la \lambda_0(g(\bar{X}_T)-\bar{g}(\bar{X}_T))+\delta I^g_T, G(X_T-\bar{X}_T)\ra]
-\sE[\la Y_0-\bar{Y}_0, G\delta \xi\ra]
\\
&\le 
\sE\bigg[\int_0^T
\la \lambda_0(b(\bar{\Theta}_t)-\bar{b}(\bar{\Theta}_t))+\delta \cI^b_t, G^*( Y_t-\bar{Y}_t)\ra 
+
\la \lambda_0(\sigma(\bar{\Theta}_t)-\bar{\sigma}(\bar{\Theta}_t))+\delta \cI^\sigma_t, G^*( Z_t-\bar{Z}_t)\ra
\\
&\quad 
+
\la -\big(\lambda_0(f(\bar{\Theta}_t)-\bar{f}(\bar{\Theta}_t))+\delta \cI^f_t\big), G( X_t-\bar{X}_t)\ra
\, \d t
\bigg]
\\
&\quad
-
\lambda_0\int_0^T
\bigg(
 \beta_1
 \phi_1(X_t,\bar{X}_t)
+\beta_2
  \phi_2(t,\Theta_t, \bar{\Theta}_t)\bigg)\, \d t,
\end{align*}
 which together with Young's inequality yields for each $\eps>0$ that
\begin{align}\l{eq:mono_stab_decouple}
\begin{split}
&
\lambda_0 \a_1\phi_1(X_T,\bar{X}_T)+
\lambda_0\int_0^T
\bigg(
 \beta_1
 \phi_1(X_t,\bar{X}_t)
+\beta_2
  \phi_2(t,\Theta_t, \bar{\Theta}_t)\bigg)\, \d t
  \\
  &
  \le
\eps( \|X_T-\bar{X}_T\|_{L^2}^2+\|Y_0-\bar{Y}_0\|_{L^2}^2
 +\|\Theta-\bar{\Theta}\|_{\cH^2}^2)
 +C{\eps}^{-1}\textrm{RHS},
 \end{split}
\end{align}
where $\textrm{RHS}$ denotes the right-hand side of \eqref{eq:mono_stab}.

We now separate our discussion into two cases: (1) %$m<n$ or $m=n$ with 
$\b_2>0$ and the estimate \eqref{eq:b_sigma_lip} holds;
(2) %$m>n$ or $m=n$ with 
$\a_1,\b_1>0$ and the estimate  \eqref{eq:f_g_lip} holds.
For the first case, we can obtain from \eqref{eq:mono_stab_decouple}
and $\lambda_0,\a_1,\b_1\ge 0$
 that it holds for all $\eps>0$ that,
\begin{align}\l{eq:mono_stab_decouple_m<n}
\begin{split}
&
\lambda_0\int_0^T
  \phi_2(t,\Theta_t, \bar{\Theta}_t)\, \d t
  \le
\eps( \|X-\bar{X}\|_{\cS^2}^2+\|Y-\bar{Y}\|_{\cS^2}^2
 +\|Z-\bar{Z}\|_{\cH^2}^2)
 +C{\eps}^{-1}\textrm{RHS}.
 \end{split}
\end{align}
Then, by using   the  Burkholder-Davis-Gundy  inequality, \eqref{eq:b_sigma_lip}, Gronwall's inequality
and the fact that $\lambda_0\in [0,1]$, we can 
deduce that 
\begin{align*}
\|X-\bar{X}\|_{\cS^2}^2
&\le 
C\bigg(
\int_0^T \lambda_0\phi_2(t,\Theta_t, \bar{\Theta}_t)\, \d t
+
\|\xi-\bar{\xi}\|_{L^2}^2
\\
&\quad
+\|\lambda_0 (b(\bar{\Theta})-\bar{b}(\bar{\Theta}))
+\delta \cI^b\|_{\cH^2}^2
+\|\lambda_0 (\sigma(\bar{\Theta})-\bar{\sigma}(\bar{\Theta}))
+\delta \cI^\sigma\|_{\cH^2}^2
\bigg),
\end{align*}
which together with \eqref{eq:mono_stab_decouple_m<n} yields for all small enough $\eps>0$ that 
\begin{align*}
\begin{split}
&
\|X-\bar{X}\|_{\cS^2}^2
  \le
\eps(\|Y-\bar{Y}\|_{\cS^2}^2
 +\|Z-\bar{Z}\|_{\cH^2}^2)
 +C{\eps}^{-1}\textrm{RHS}.
 \end{split}
\end{align*}
Moreover,  by standard estimates for MV-BSDEs \eqref{eq:mv-fbsde_bwd}, we can obtain that
\begin{align*}
&\|Y-\bar{Y}\|_{\cS^2}^2
 +\|Z-\bar{Z}\|_{\cH^2}^2
 \\
&  \le
  C\bigg(\|X-\bar{X}\|_{\cS^2}^2
  +
  \|\lambda_0 (g(\bar{X}_T)-\bar{g}(\bar{X}_T))+\delta \cI^g_T\|_{L^2}^2
  +\|\lambda_0 (f(\bar{\Theta})-\bar{f}(\bar{\Theta}))
+\delta\cI^f\|_{\cH^2}^2
\bigg),
\end{align*}
which completes the desired estimate \eqref{eq:mono_stab} for the first case.

For the second case with $\a_1,\b_1>0$, we can obtain from
 \eqref{eq:mono_stab_decouple} that it holds for all $\eps>0$ that,
\begin{align}\l{eq:mono_stab_decouple_m>n}
\begin{split}
&
\lambda_0 \phi_1(X_T,\bar{X}_T)+
\lambda_0\int_0^T
 \phi_1(X_t,\bar{X}_t)\, \d t
    \\
&  \le
\eps( \|X-\bar{X}\|_{\cS^2}^2+\|Y-\bar{Y}\|_{\cS^2}^2
 +\|Z-\bar{Z}\|_{\cH^2}^2)
 +C{\eps}^{-1}\textrm{RHS}.
 \end{split}
\end{align}
Standard stability estimates for MV-BSDEs with Lipschitz coefficients (see e.g.~\cite[Theorem 4.2.3]{zhang2017}) 
shows that
\begin{align*}
&\|Y-\bar{Y}\|_{\cS^2}^2
 +\|Z-\bar{Z}\|_{\cH^2}^2
 \\
&  \le
  C\bigg(
  \|\lambda_0 (g({X}_T)-\bar{g}(\bar{X}_T))+\delta \cI^g_T\|_{L^2}^2
  +\|\lambda_0 
  (
  f(\cdot,X_\cdot,\bar{Y}_\cdot,\bar{Z}_\cdot, \sP_{(X_\cdot,\bar{Y}_\cdot,\bar{Z}_\cdot)})
-\bar{f}(\bar{\Theta}))
+\delta\cI^f\|_{\cH^2}^2
\bigg),
\end{align*}
from which, by using 
\eqref{eq:f_g_lip},
the fact that $\lambda_0\in [0,1]$
and \eqref{eq:mono_stab_decouple_m>n},
we can deduce for all sufficiently small $\eps>0$ that
\begin{align*}
&\|Y-\bar{Y}\|_{\cS^2}^2
 +\|Z-\bar{Z}\|_{\cH^2}^2
\\
&\le
C\bigg(
  \|\lambda_0 (g({X}_T)-{g}(\bar{X}_T))\|_{L^2}^2
 +
  \|\lambda_0 (g(\bar{X}_T)-\bar{g}(\bar{X}_T))+\delta \cI^g_T\|_{L^2}^2
  \\
&\q   +
  \|\lambda_0 
  (
  f(\cdot,X_\cdot,\bar{Y}_\cdot,\bar{Z}_\cdot, \sP_{(X_\cdot,\bar{Y}_\cdot,\bar{Z}_\cdot)})
-f(\bar{\Theta}_\cdot)
)\|_{\cH^2}^2
  +\|\lambda_0 
  (
  f(\bar{\Theta})
-\bar{f}(\bar{\Theta}))
+\delta\cI^f\|_{\cH^2}^2
\bigg)
\\
&  \le
\eps \|X-\bar{X}\|_{\cS^2}^2
 +C{\eps}^{-1}\textrm{RHS}.
\end{align*}
Then,  standard  stability estimates for MV-SDEs with Lipschitz coefficients give that 
\begin{align*}
&\|X-\bar{X}\|_{\cS^2}^2
\\
&\le
C\bigg(
\|\delta \xi\|_{L^2}^2
+\|\lambda_0 
  (
  b(\cdot,\bar{X}_\cdot,{Y}_\cdot,\sP_{(\bar{X}_\cdot,{Y}_\cdot,{Z}_\cdot)})
-\bar{b}(\bar{\Theta}))
+\delta\cI^b\|_{\cH^2}^2
\\
& \q 
+\|\lambda_0 
  (
  \sigma(\cdot,\bar{X}_\cdot,{Y}_\cdot,\sP_{(\bar{X}_\cdot,{Y}_\cdot,{Z}_\cdot)})
-\bar{\sigma}(\bar{\Theta}))
+\delta\cI^\sigma\|_{\cH^2}^2
\bigg)
\\
&  \le
 C\big(\|Y-\bar{Y}\|_{\cS^2}^2+\|Z-\bar{Z}\|_{\cH^2}^2+\|\delta \xi\|_{L^2}^2
 +\|\lambda_0 
  (
  b(\bar{\Theta})
-\bar{b}(\bar{\Theta}))
+\delta\cI^b\|_{\cH^2}^2
\\
&\q
 +\|\lambda_0 
  (
  \sigma(\bar{\Theta})
-\bar{\sigma}(\bar{\Theta}))
+\delta\cI^\sigma\|_{\cH^2}^2
\big)
\le 
 C\cdot \textrm{RHS},
\end{align*}
which completes the proof of  the desired estimate \eqref{eq:mono_stab} for the second case.
\end{proof}

We are now ready to present the proof of Corollary \ref{cor:mono}.

\begin{proof}[Proof of Corollary \ref{cor:mono}]
We shall establish the  well-posedness, stability and \textit{a priori} estimates
 for \eqref{eq:mv-fbsde} with
an initial time
 $t=0$ and initial state $\xi_0\in L^2(\cF_0;\sR^n)$
 by applying Lemma \ref{lemma:mono_stab}. Similar arguments apply to a general 
 initial time 
 $t\in [0,T]$ and initial state $\xi\in L^2(\cF_t;\sR^n)$.

Let us start by proving 
the unique solvability of \eqref{eq:mv-fbsde} with a given $\xi_0\in L^2(\cF_0;\sR^n)$.
To simplify the notation, 
for every $\lambda_0\in [0,1]$, we say $(\cP_{\lambda_0})$ holds if 
 for any 
 $\xi\in  L^2(\cF_0;\sR^n)$,
$(\cI^b,\cI^\sigma,\cI^f)\in \cH^2(\sR^n\t\sR^{n\t d}\t \sR^m)$ and $\cI^g_T\in L^2(\cF_T;\sR^m)$,
\eqref{eq:moc} with $\lambda=\lambda_0$ 
admits a unique solution in $\sB\coloneqq \cS^2(\sR^n) \t \cS^2(\sR^m) \t \cH^2(\sR^{m\t d})$.
It is clear that $(\cP_{0})$ holds since \eqref{eq:moc} is decoupled. 
Now we show 
 there exists a constant $\delta>0$, such that 
 if $(\cP_{\lambda_0})$   holds for some $\lambda_0\in [0,1)$,
 then $(\cP_{\lambda'_0})$ also holds for  all $\lambda_0'\in (\lambda_0,\lambda_0+\delta]\cap [0,1]$.
Note that this claim along with  the method of continuation
 implies the desired unique solvability of \eqref{eq:mv-fbsde} (i.e., \eqref{eq:moc} with $\lambda=1$,
$(\cI^b,\cI^\sigma,\cI^f,\cI^g_T)=0$, $\xi=\xi_0$).

To establish the desired claim, let  $\lambda_0\in [0,1)$
be a constant for which $(\cP_{\lambda_0})$ holds,
$\eta\in [0,1]$ and 
$(\tilde{\cI}^b,\tilde{\cI}^\sigma,\tilde{\cI}^f)\in \cH^2(\sR^n\t\sR^{n\t d}\t \sR^m)$, $\tilde{\cI}^g_T\in L^2(\cF_T;\sR^m)$, 
$\xi\in  L^2(\cF_0;\sR^n)$
be
arbitrarily given coefficients,
we introduce the following mapping $\Xi:\sB\to \sB$
such that 
for all $\Theta=(X,Y,Z)\in \sB$, $\Xi(\Theta)\in \sB$ is the solution to \eqref{eq:moc} 
with $\lambda=\lambda_0$,
$\cI^b_t=\eta b(t,X_t,Y_t,\sP_{\Theta_t})+\tilde{\cI}^b_t$,
$\cI^\sigma_t=\eta \sigma (t,X_t,Y_t,  \sP_{\Theta_t})+\tilde{\cI}^\sigma_t$,
$\cI^f_t=\eta f(t,\Theta_t, \sP_{\Theta_t})+\tilde{\cI}^f_t$
and $\cI^g_T=\eta g(X_T,\sP_{X_T})+\tilde{\cI}^g_T$,
which is well-defined due to the fact that $\lambda_0\in [0,1)$ satisfies the induction hypothesis.
Observe that
by setting $(\bar{b},\bar{\sigma},\bar{f},\bar{g})=(b,\sigma,f,g)$ in Lemma \ref{lemma:mono_stab},
 we see that there exists a constant $C>0$, independent of $\lambda_0$, such that it holds  for all $\Theta,\Theta'\in \sB$ that
 \begin{align*}
 \begin{split}
 &\|\Xi(\Theta)-\Xi({\Theta}')\|_{\sB}^2
 \\
& \le C\bigg\{\|\eta (g({X}_T,\sP_{{X}_T})-{g}({X}'_T,\sP_{{X}'_T}))\|_{L^2}^2
+\|\eta (b(\cdot,{X}_\cdot,{Y}_\cdot,\sP_{{\Theta}_\cdot})-{b}(\cdot,{X}'_\cdot,{Y}'_\cdot,\sP_{{\Theta}'_\cdot}))
\|_{\cH^2}^2
\\
&\quad
+\|\eta (\sigma(\cdot,{X}_\cdot,{Y}_\cdot,\sP_{{\Theta}_\cdot})-{\sigma}(\cdot,{X}'_\cdot,{Y}'_\cdot,\sP_{{\Theta}'_\cdot}))
\|_{\cH^2}^2
+\|\eta (f(\cdot,{\Theta}_\cdot,\sP_{{\Theta}_\cdot})-{f}(\cdot,{\Theta}'_\cdot,\sP_{{\Theta}'_\cdot}))
\|_{\cH^2}^2
\bigg\}
\\
&\le C\eta^2\|\Theta-{\Theta}'\|_{\sB}^2,
\end{split}
  \end{align*}
which shows that $\Xi$ is a contraction when $\eta$ is sufficiently small (independent of $\lambda_0$),
and subsequently leads to  the desired claim due to Banach's fixed point theorem.

For any given $\xi, \xi'\in L^2(\cF_0;\sR^n)$,
the desired stochastic stability of \eqref{eq:mv-fbsde} follows directly from 
 Lemma \ref{lemma:mono_stab}
 by setting 
$\lambda=1$,
$(\bar{b},\bar{\sigma},\bar{f},\bar{g})=(b,\sigma,f,g)$,
$(\bar{\cI}^b,\bar{\cI}^\sigma,\bar{\cI}^f)=(\cI^b,\cI^\sigma,\cI^f)=0$,
 $\bar{\cI}^g_T=\cI^g_T=0$ and
 $\bar{\xi}=\xi'$.
Moreover,  
for any given $\xi\in L^2(\cF_0;\sR^n)$,
 by setting 
$\lambda=1$,
$(\bar{b},\bar{\sigma},\bar{f},\bar{g})=0$ (which  clearly satisfies (H.\ref{assum:mv-fbsde_lip})),
$(\bar{\cI}^b,\bar{\cI}^\sigma,\bar{\cI}^f)=(\cI^b,\cI^\sigma,\cI^f)=0$,
 $\bar{\cI}^g_T=\cI^g_T=0$,
 $\bar{\xi}=0$
 and $(\bar{X},\bar{Y},\bar{Z})=0$
 in  Lemma \ref{lemma:mono_stab},
we can deduce the 
 estimate that
 \begin{align*}
 \begin{split}
 &\|X\|_{\cS^2}^2+ \|Y\|_{\cS^2}^2+ \|Z\|_{\cH^2}^2
 \\
& \le C\bigg\{\|\xi\|_{L^2}^2+
| g(0,\bm{\delta}_{{0}_{n}})|^2
+\| b(\cdot, 0,\bm{\delta}_{{0}_{n+m}})\|_{L^2(0,T)}^2
+\| \sigma(\cdot, 0,\bm{\delta}_{{0}_{n+m}})\|_{L^2(0,T)}^2
\\
&\quad
+\| f(\cdot, 0,\bm{\delta}_{{0}_{n+m+md}})\|_{L^2(0,T)}^2
\bigg\}
\le C(1+\|\xi\|_{L^2}^2),
\end{split}
\end{align*}
which shows the desired moment bound of the processes $(X,Y,Z)$.

Finally, 
for a given initial condition $\xi_0\in L^2(\cF_0;\sR^n)$,
we can conclude 
the desired regularity estimates 
for solutions to \eqref{eq:mv-fbsde} 
from
Theorem \ref{thm:mv-fbsde_regularity}
and
Proposition \ref{eq:decoupling_exist},
which
completes the proof of Corollary \ref{cor:mono}.
\end{proof}

\end{document}